\tikzset{>=latex}
\theoremstyle{plain}
\newtheorem{thm}{Theorem}
\newtheorem{question}{Question}
\newtheorem{lemma}{Lemma}
\newtheorem{cor}{Corollary}
\newtheorem{prop}{Proposition}
\theoremstyle{definition} \theoremstyle{definition}
\newtheorem{remark}{Remark}
\newtheorem{example}{Example}
\theoremstyle{remark}
\newcommand{\G}{\textsc{\G}}
\newcommand{\A}{\mathbb{A}}
\newcommand{\Q}{\mathbb{Q}}
\newcommand{\Ad}{{\rm Ad}\,}
\newcommand{\Z}{\mathbb{Z}}
\newcommand{\C}{\mathbb{C}}
\newcommand{\F}{\mathbb{F}}
\def\G{{\rm G}}
\def\SL{{\rm SL}}
\def\GL{{\rm GL}}
\def\PGL{{\rm PGL}}
\def\Gal{{\rm Gal}}
\def\Sym{{\rm Sym}}
\def\Ad{{\rm Ad\, }}
\begin{document}

\subjclass{Primary 11F70; Secondary 22E55}

\title[Relations between cusp forms sharing Hecke eigenvalues]
{Relations between cusp forms sharing Hecke eigenvalues}

\begin{abstract}
  In this paper we consider the question of when the set of Hecke eigenvalues of a cusp form on $\GL_n(\A_F)$
  is contained in the set of Hecke eigenvalues of a cusp form on $\GL_m(\A_F)$ for $n \leq m$. This
  question is closely related to a question about finite dimensional representations of an
  abstract group, which also we consider in this work.
\end{abstract}

\author{Dipendra Prasad and Ravi Raghunathan}
\thanks{
  DP thanks  Science and Engineering research board of the
  Department of Science and Technology, India for its support
through the JC Bose
National Fellowship of the Govt. of India, project number JBR/2020/000006.
His work was also supported by a grant of
the Government of the Russian Federation
for the state support of scientific research carried out
under the  agreement 14.W03.31.0030 dated 15.02.2018. }

\address{Indian Institute of Technology Bombay, Powai, Mumbai-400076}
\address{DP: St Petersburg State University, St Petersburg, Russia}
\email{prasad.dipendra@gmail.com}
\email{ravir@math.iitb.ac.in}
\maketitle
    {\hfill \today}
    
\tableofcontents

\section{Introduction}
Let $F$ be a number field. Each automorphic representation $\pi$ of $\GL_d(\A_F)$
gives rise to Hecke eigenvalues (also called the Satake parameter), a $d$-tuple of (unordered) nonzero complex numbers
$H(\pi_v) =(a_{1v},\cdots, a_{dv})$,  at each place $v$ of $F$ where $\pi$ is unramified, and thus at almost all places of $F$.

Let $\pi_1$ and  $\pi_2$ be two  irreducible  automorphic representations of $\GL_n(\A_F)$ which are written as
isobaric sums:

\[\pi_1 = \pi_{11} \boxplus \pi_{12} \boxplus \cdots \boxplus \pi_{1\ell},\]
\[\pi_2 = \pi_{21} \boxplus \pi_{22} \boxplus \cdots \boxplus \pi_{2\ell'},\]
where $\pi_{1j}$ and $\pi_{2k}$ are irreducible  cuspidal automorphic representations of $\GL_{d_j}(\A_F)$ and 
$\GL_{d_k}(\A_F)$ respectively.
Then by the strong multiplicity one theorem due to Jacquet and Shalika cf. \cite{JS1}, \cite{JS2}, 
if $\pi_1$ and $\pi_2$  have the same Hecke eigenvalues $H(\pi_{1,v}) = H(\pi_{2,v})$ at almost all places $v$ of $F$ where $\pi_1,\pi_2$
are unramified, then $\ell = \ell'$, and up to a permutation of indices, $\pi_{1j} = \pi_{2j}$. 

In this paper, we will consider a variant of the strong multiplicity one theorem, identified in the following definition.

\vspace{10mm}

\noindent{\bf Definition:} Given automorphic representations $\pi_1$ on $\GL(n,\A_F)$ and $\pi_2$
on $\GL(m,\A_F)$, we say that $\pi_1$ is {\bf immersed} in $\pi_2$, written
$\pi_1 \preceq \pi_2$,  if the Hecke eigenvalues of $\pi_1$ (counted with multiplicity) are contained in the
Hecke eigenvalues of $\pi_2$ (counted with multiplicity) for almost all primes of the number field $F$.
On the other hand, we say that $\pi_1$ is {\bf embedded} in $\pi_2$, written as
$\pi_1 \subset \pi_2$ if there is an automorphic representation $\pi_3$ such that,
\[\pi_2 = \pi_1 \boxplus \pi_3.\]

\vspace{5mm}
The following is the motivating question for this paper.
\vspace{5mm}

\noindent{\bf Question:}

\begin{enumerate} \item Can it happen for distinct cuspidal representations that $\pi_1 \preceq \pi_2$?

\item If yes, can we classify all such pairs of cuspidal representations $\pi_1 \preceq \pi_2$?

  \end{enumerate}
\vspace{10mm}

One would have liked to assert that for cuspidal representations  $\pi_1 \preceq \pi_2$
 never happens if $n<m$, but that is not true. For
example, let $\pi$ be a cuspidal non-CM automorphic representation of $\PGL_2(\A_F)$.
At any unramified place $v$ of $F$, if $(a_v, a_v^{-1})$ are the Hecke eigenvalues of $\pi_v$, then
for the automorphic representation $\Sym^2(\pi)$ of $\PGL_3(\A_F)$, the Hecke eigenvalues  at the place $v$ of $F$, are
$(a_v^2,1,a_v^{-2})$. Thus the Hecke eigenvalues of the trivial representation of $\GL_1(\A_F)$ are contained
in the set of Hecke eigenvalues of the cuspidal automorphic representation $\Sym^2(\pi)$ of $\PGL_3(\A_F)$ at each unramified
place of $\pi$.

This paper is written in the hope that although $H(\pi_{1,v})$ can be contained in $H(\pi_{2,v})$ of $\pi_2$, at almost all the unramified places of $\pi_1$ and $\pi_2$, without $\pi_1$ being the same as $\pi_2$, this happens rarely, and only for
pairs of representations $(\pi_1,\pi_2)$ which are related in some well-defined way. 

We begin by proving the following proposition.

\begin{prop}
Let $\pi_1$ (resp. $\pi_2$) be an irreducible  cuspidal automorphic representation of $\GL_{n}(\A_F)$
(resp. $\GL_{n+1}(\A_F)$). 
Then $H(\pi_{1,v})$  of $\pi_1$ cannot be  contained in $H(\pi_{2,v})$ of $\pi_2$
at almost all places of $F$ where $\pi_1,\pi_2$ are unramified.
  \end{prop}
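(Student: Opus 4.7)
The plan is to argue by contradiction, reducing to a failure of cuspidality via the strong multiplicity one theorem. Suppose, for contradiction, that $H(\pi_{1,v}) \subset H(\pi_{2,v})$ at almost all unramified places $v$. Since $\pi_2$ has one more Hecke parameter than $\pi_1$, there is at each such $v$ a unique extra Hecke eigenvalue $b_v \in H(\pi_{2,v}) \setminus H(\pi_{1,v})$.

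The key step is to recognize the collection $\{b_v\}$ as the unramified Hecke eigenvalues of a global Hecke character. For any irreducible unramified representation $\sigma$ of $\GL_d(F_v)$ with Satake parameters $(a_1,\ldots,a_d)$, the central character satisfies $\omega_\sigma(\varpi_v) = \prod_i a_i$, where $\varpi_v$ is a uniformizer. Comparing this identity for $\pi_{2,v}$ and $\pi_{1,v}$ yields
$$b_v \;=\; \omega_{\pi_2,v}(\varpi_v)\,\omega_{\pi_1,v}(\varpi_v)^{-1} \;=\; \chi_v(\varpi_v),$$
where $\chi := \omega_{\pi_2}\,\omega_{\pi_1}^{-1}$ is a Hecke character of $\A_F^\times$. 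Consequently, the isobaric representation $\Pi := \pi_1 \boxplus \chi$ of $\GL_{n+1}(\A_F)$ has the same Hecke eigenvalues as $\pi_2$ at almost all unramified places.

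To conclude, I would invoke the strong multiplicity one theorem of Jacquet-Shalika, as recalled in the introduction, for isobaric representations: this gives $\pi_2 \cong \pi_1 \boxplus \chi$. But $\pi_2$ is cuspidal, whereas the right-hand side is a genuine isobaric sum of two nontrivial cuspidal automorphic representations (namely the cuspidal $\pi_1$ on $\GL_n$ and the automorphic character $\chi$ on $\GL_1$), which is therefore not cuspidal. This is the desired contradiction.

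The potential obstacle is the passage from the pointwise-defined numbers $b_v$ to a bona fide global object; one could imagine having to argue via an analysis of Rankin-Selberg $L$-functions to promote the local data into a Hecke character. The central-character computation above sidesteps this entirely, which is what makes the argument essentially immediate once one thinks to form the isobaric sum $\pi_1 \boxplus \chi$.
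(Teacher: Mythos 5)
Your argument is exactly the paper's: identify the extra eigenvalue at each place with the value of the Hecke character $\chi=\omega_{\pi_2}\omega_{\pi_1}^{-1}$ (the paper's $\omega_2/\omega_1$), conclude $\pi_2=\pi_1\boxplus\chi$ by strong multiplicity one, and contradict the cuspidality of $\pi_2$. The central-character computation you spell out is precisely the step the paper dismisses as ``easy to see,'' so the proposal is correct and matches the paper's proof.
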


\begin{proof}
The proof is  a simple consequence of the strong multiplicity one theorem of Jacquet-Shalika recalled 
at the beginning of this paper. Let $\omega_1$ (resp. $\omega_2$) be the central character of 
$\pi_1$ (resp. $\pi_2$); these are Gr\"ossencharacters of $\GL_1(\A_F)$.
It is easy to see that, if $H(\pi_{1,v})$  is contained in $H(\pi_{2,v})$ at almost all places $v$ of $F$, then,
\[ \pi_1 \boxplus (\omega_2/\omega_1) = \pi_2,\]
which is not allowed by the strong multiplicity one theorem.
  \end{proof}

Here is another similarly `negative' result, this time proved with considerably more effort.

\begin{prop} \label{gl4}
  Let $\pi$ be an irreducible  cuspidal automorphic representation of
  $\GL_{4}(\A_F)$.
  Then $H(\pi_{v})$
cannot contain 1 at almost all places $v$ of $F$ where $\pi$ is unramified.
\end{prop}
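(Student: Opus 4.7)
The plan is to derive a contradiction with the cuspidality of $\pi$ by constructing an automorphic representation $\sigma$ of $\GL_3(\A_F)$ whose Hecke data, together with that of the trivial representation of $\GL_1(\A_F)$, matches the Hecke data of $\pi$ at almost all unramified places. Once such a $\sigma$ is produced, the strong multiplicity one theorem of Jacquet--Shalika forces $\pi \cong \mathbf{1} \boxplus \sigma$, contradicting the assumed cuspidality of $\pi$ on $\GL_4$.

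First I would define a candidate $\sigma = \otimes'_v \sigma_v$. At each unramified place $v$ where $1 \in H(\pi_v)$, let $\sigma_v$ be the spherical representation of $\GL_3(F_v)$ with Satake parameters $H(\pi_v) \setminus \{1\}$. At the finitely many remaining places (where $\pi$ is ramified, together with the archimedean places), I would use the local Langlands correspondence to peel off a one-dimensional trivial sub-parameter from the parameter of $\pi_v$ and take $\sigma_v$ to correspond to the complementary three-dimensional sub-parameter, chosen so that $L(s,\pi_v) = L(s,\mathbf{1}_v)\,L(s,\sigma_v)$ and $\varepsilon(s,\pi_v) = \varepsilon(s,\mathbf{1}_v)\,\varepsilon(s,\sigma_v)$, and similarly after twisting by any idele class character of $F$.

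Next I would apply the converse theorem of Cogdell and Piatetski-Shapiro for $\GL_3$, which asserts that $\sigma$ is an automorphic representation provided that for every idele class character $\chi$ of $F^\times \backslash \A_F^\times$, the twisted $L$-function $L(s, \sigma \times \chi)$ is entire of finite order, bounded in vertical strips, and satisfies the expected functional equation. These three properties would be inherited from the corresponding facts for $L(s, \pi \times \chi)$ and $L(s, \chi)$ via the global factorisation
\[ L(s, \pi \times \chi) = L(s, \chi)\, L(s, \sigma \times \chi),\]
which holds because of the local factorisation built into the construction above. Indeed, $L(s, \pi \times \chi)$ is entire of finite order because $\pi$ is cuspidal on $\GL_4$ while $\chi$ lives on $\GL_1$ (so $\pi \not\cong \chi^{-1}$); the Hecke $L$-function $L(s,\chi)$ is entire for $\chi \neq \mathbf{1}$ and has only a simple pole at $s = 1$ when $\chi = \mathbf{1}$; in every case $L(s,\chi)$ is non-vanishing on the line $\Re s = 1$. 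Hence the quotient $L(s, \sigma \times \chi) = L(s, \pi \times \chi)/L(s,\chi)$ is entire and bounded in vertical strips, with functional equation obtained by dividing those for the numerator and denominator.

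The main obstacle is the first step: the hypothesis only controls the Hecke eigenvalues of $\pi$ at unramified places, so it is not a priori clear that the local Langlands parameter of $\pi_v$ at each bad $v$ actually contains a trivial character as a sub-parameter that can be peeled off with matching $L$- and $\varepsilon$-factors after twisting by arbitrary characters. Overcoming this delicate local issue typically requires either invoking the stability of local $\gamma$-factors under sufficiently ramified twists (à la Cogdell--Shahidi), or appealing to an $S$-variant of the converse theorem where the analytic conditions on $L(s, \sigma \times \chi)$ are only required for $\chi$ suitably ramified outside a preassigned finite set of places, with $\sigma$ allowed to be modified there.
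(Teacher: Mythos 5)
There is a genuine gap, and it is fatal to the strategy rather than a repairable technicality. Your whole construction rests on the claim that
\[
L(s,\sigma\times\chi)\;=\;\frac{L(s,\pi\times\chi)}{L(s,\chi)}
\]
is entire because the numerator is entire and $L(s,\chi)$ is non-vanishing on $\Re s=1$. But non-vanishing on the edge of the critical strip is beside the point: the completed Hecke $L$-function $L(s,\chi)$ has infinitely many nontrivial zeros with $0<\Re s<1$, and the quotient acquires a pole at every such zero unless it happens to be cancelled by a zero of $L(s,\pi\times\chi)$ --- something you have no way of knowing. So the analytic hypotheses of the Cogdell--Piatetski-Shapiro converse theorem (entire, bounded in vertical strips) cannot be verified for $\sigma\times\chi$, and this ``divide off the trivial representation'' approach collapses. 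The local difficulty at the ramified places that you flag at the end is real but secondary; the division step already fails at the unramified level of the global $L$-function.

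The paper's proof avoids any attempt to peel $\mathbf{1}$ off of $\pi$. Instead it observes that $1\in H(\pi_v)$ means $\det(1-H(\pi_v))=0$, i.e.\ the characteristic-polynomial identity
\[
1+\Lambda^4(V)+\Lambda^2(V)=V+\Lambda^3(V)
\]
holds for the $4$-dimensional Satake parameter $V$ at almost all $v$. Every term here is known to be automorphic: $\Lambda^2(\pi)$ by Kim's exterior-square functoriality, $\Lambda^3(\pi)=\omega\cdot\pi^\vee$ and $\Lambda^4(\pi)=\omega$ with $\omega$ the central character. Strong multiplicity one then forces an equality of isobaric sums
\[
\mathbf{1}\boxplus\omega\boxplus\Lambda^2(\pi)=\pi\boxplus(\omega\cdot\pi^\vee),
\]
whose right-hand side is a sum of two cuspidal representations of $\GL_4(\A_F)$ while the left-hand side contains two Gr\"ossencharacters --- a contradiction. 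If you want to salvage your write-up, this is the route to take: replace the converse-theorem construction by an appeal to known functoriality ($\Lambda^2$ of $\GL_4$) together with the Jacquet--Shalika classification.
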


\begin{proof}
  We will prove the proposition by contradiction, so assume that
  $H(\pi_{v})$ contains 1 at almost all places of $F$ where $\pi$ is unramified.
  Observe that to say that $H(\pi_{v})$ contains 1 is equivalent to saying that $\det(1- H(\pi_{v}))=0$,
  which translates into the following identity (assuming that $H(\pi_{v})$ operates on a 4 dimensional vector space $V$):
  \[ 1 - V + \Lambda^2(V) -\Lambda^3(V) + \Lambda^4(V) = 0.\]

  (One way to think of this identity is in the Grothendieck group of
  representations of an abstract group $G$ which comes equipped with a
  4-dimensional representation $V$ of $G$ such that the action of any $g \in G$ on $V$ has a nonzero fixed vector.)
  
Thus  we get the identity:
  \[ 1 + \Lambda^4(V)  +  \Lambda^2(V)  =  V + \Lambda^3 (V).\]

  Let the central character of $\pi$ be
  $\omega: \A_F^\times/F^\times \rightarrow \C^\times$. 
  Since we know by the work of Kim, cf. \cite{Kim}, that $\Lambda^2(\pi)$ is automorphic, by the strong multiplicity one theorem,
  we get an identity of the isobaric sum of automorphic representations:
      \[ 1 \boxplus \omega \boxplus  \Lambda^2(\pi)  =  \pi  \boxplus \omega\cdot \pi^\vee.\]
      Observe that the right hand side of this equality is a sum of two cuspidal representations on $\GL_4(\A_F)$, whereas
      there are two one dimensional characters of $\A_F^\times/F^\times$ on the left hand side. This is not allowed
      by the strong multiplicity one theorem,
      completing the proof of the proposition.
  \end{proof}

The following question lies at the basis of this work.

\begin{question} \label{conj}
Let $\pi_1$ (resp. $\pi_2$) be an irreducible cuspidal automorphic representation of $\GL_{n}(\A_F)$
(resp. $\GL_{n+2}(\A_F)$). 
Suppose that $H(\pi_{1,v})$ is contained in $H(\pi_{2,v})$ at almost all places of $F$ where $\pi_1$ and $\pi_2$ are unramified. Then is there an automorphic representation $\pi$ of $\GL_2(\A_F)$ with central character $\omega: \A_F^\times/F^\times \rightarrow \C^\times$,
and a character $\chi: \A_F^\times/F^\times \rightarrow \C^\times$,
such that,

\begin{eqnarray*} \pi_1 & = & \chi\cdot\omega \otimes \Sym^{n-1} (\pi),\\
\pi_2 & = & \chi \otimes \Sym^{n+1} (\pi),\end{eqnarray*}
i.e., up to twist by a character, is $(\pi_1,\pi_2) = (\omega   \otimes \Sym^{n-1} (\pi), \Sym^{n+1} (\pi))$?
  \end{question}
We will provide an affirmative answer to this question  for $n=1,2,3$ in this paper. On the other hand, in section \ref{finite}, we will
provide counter-examples to this question using the strong Artin conjecture
for all pairs of integers $(q-1,q+1)$ where $q\geq 5$ is a prime power.
The work \cite{Ca} proves strong Artin conjecture for certain cases for $q=5$, which allows us to  construct an unconditional counter example for the pair $(\GL_4, \GL_6)$ over $\Q$ in section \ref{finite}.

The question studied in this paper can be studied from a purely group theoretic point of view, and is discussed in section \ref{finite} from this perspective. We are unaware of this group theoretic point of view to have been put to use earlier; it seems of interest for
connected reductive groups too. In section \ref{conn}, we 
prove that the group theoretic question has an affirmative answer for groups which are
not virtually abelian, i.e., do not contain a subgroup of finite index which is abelian. Thus in the automorphic context, when one of the representations $\pi_1$, or $\pi_2$ is Steinberg
at a finite place, or has regular infinitesimal character at one of the archimedean places of $F$, our question should have an affirmative answer, but for the
moment, we do not know how to deal with it.

\begin{remark} Here is a geometric analogue of the questions being discussed in this paper.\footnote{This question has now been settled in a recent work of Khare and Larsen, cf. \cite{KL}.}
  Let $A$ and $B$ be abelian varieties
  over a number field $F$ with $A$ simple. For $v$ any finite place of $F$ where both $A$ and $B$ have good reduction, let $A_v, B_v$ denote their
  reductions mod $v$ (thus $A_v,B_v$ are abelian varieties over finite fields). Assume that there are isogenies from $A_v$ to $B_v$
  (not surjective as we are not assuming $\dim (A) = \dim(B)$) for almost all places $v$ of $F$ where $A$ and $B$ have good reduction.
  Then the question is if there is an isogeny from $A$ to $B$? If $\dim(A)=\dim(B)$, this is a consequence of
  the famous theorem of Faltings.
  \end{remark}

\begin{remark}
The paper was inspired by the notion of {\it relevance} introduced in \cite{GGP}, and to understand whether two global A-parameters which are locally relevant at all places must be globally relevant. This is not the case, and exactly for the reason discussed in this paper: that the Hecke eigenvalues of the cuspidal representation $\pi_1$ may be contained in the set of Hecke eigenvalues of the cuspidal representation $\pi_2$ at almost all places of the number field without $\pi_1$ being the same as 
$\pi_2$. 
\end{remark}

\begin{remark}
  Most of the paper deals with cusp forms $\pi_1$ on $\GL_n(\A_F)$, and $\pi_2$ on $\GL_m(\A_F)$
for the restricted pairs $(n,m)$ with $m=n+2$, as the first non-obvious case beyond $m=n$ and $m=n+1$, 
  such  that the Hecke eigenvalues
  of $\pi_1$ are contained in the Hecke eigenvalues of $\pi_2$ at almost all places of $F$.
  However, one might begin at the other
  extreme $(n,m)=(1,m)$, and try to classify cuspforms $\pi_2$ on $\GL_m(\A_F)$ such that the  Hecke eigenvalues of $\pi_2$ at almost all places of $F$ contain the eigenvalue 1. By Proposition \ref{case1} below, there is a nice answer for $(n,m)=(1,3)$, whereas by Proposition \ref{gl4}, there are none in the case $(n,m)=(1,4)$.
    It is easy to see that 
 the cuspidal representations $\pi_2$ of $\GL_6(\A_F)$ which arise as
  the
  basechange of a cuspidal selfdual
  representation of $\PGL_3(\A_E)$ for $E/F$ quadratic,  have Hecke eigenvalue 1 at almost all places of $F$.
  Using Theorem 2 of the paper \cite{Yam} and  using a similar identity as in the proof of Proposition  \ref{gl4}
  which this time would be:
  \[ 1 + \Lambda^2+  \Lambda^4(V) +\Lambda^6(V)  =  V + \Lambda^3 (V) + \Lambda^5(V),\]
   a cuspidal representations $\pi_2$ of $\GL_6(\A_F)$
having Hecke eigenvalue 1 at almost all places of $F$ arises as
  the
  basechange of a cuspidal 
  representation of $\GL_3(\A_E)$ for $E/F$ quadratic
  (which is most likely selfdual, and on $\PGL_3(\A_F)$, but this we have not proved). 
    We have not investigated the situation for general pairs $(1,m)$. 
\end{remark}

We end the introduction by remarking that the last two sections of the paper, sections 5 and 6, are written for finite groups and
Lie groups respectively, and are quite independent of the earlier sections. Section 5 eventually has implications for automorphic representations
through the known cases of (strong) Artin's conjecture. Since automorphic representations on $\GL_n(\A_F)$, $F$ a function field,
are characterised by their Galois representations by the work of L. Lafforgue, these sections also construct both counter-examples to
Question \ref{conj}  above, and to assert that the only counter-examples come from potentially abelian automorphic representations.

\vspace{1cm}

\noindent{\bf Acknowledgements:} The authors thank F. Calegari and D. Ramakrishnan for helpful correspondences, and the referee  for  careful reading of our paper and several useful comments.
\section{Preliminaries}

For automorphic representations $\pi_1, \pi_2$ on $\GL_a(\A_F), \GL_b(\A_F)$,
we denote by $\pi_1\boxplus \pi_2$
the isobaric sum of $\pi_1,\pi_2$, which is an
automorphic form on $\GL_{a+b}(\A_F)$. If $H(\pi_{1,v})$ and  $H(\pi_{2,v})$
are the
Hecke eigenvalues of $\pi_1$ and $\pi_2$, then the Hecke eigenvalues of 
$\pi_1\boxplus \pi_2$ is the union (with multiplicities)
of $H(\pi_{1,v}), H(\pi_{2,v})$.

We will also use the notation  $A \boxplus B$ where
$A$ (resp. $B$)
is any collection of $a $ (resp.  $b$)
nonzero complex numbers  defined for almost all finite places $v$ of $F$.
In this
generality, we will use partial $L$ function $L(s,A)$, where the Euler product
is taken outside a finite set $S$ of places, $S$ containing
the places at infinity.  

In the same spirit, for $A,B$ as in the last para,
we will define $A \boxtimes B$ to be a collection of $a\cdot b $
nonzero complex numbers  for almost all finite places $v$ of $F$, and
the associated partial Rankin product $L$ function $L(s,A \boxtimes B)$,
again where the Euler product
is taken outside a finite set $S$ of places, $S$ containing
the places at infinity.

\begin{lemma} \label{lemma0}
  Suppose $C$, $D$ are automorphic representations on $\GL_c(\A_F), \GL_d(\A_F)$, and $\chi$ a Gr\"ossencharacter on $F$.
  Suppose $A$ 
is any collection of $c+d-1 $ 
nonzero complex numbers defined for almost all finite places $v$ of $F$ such that
\[ A \boxplus \chi = C \boxplus D.\]
Suppose that $L(\chi^{-1}A, s)$ is known to have meromorphic continuation to the entire complex plane with no zero at $s=1$. Then
there is an automorphic representation $\pi_3$ of $\GL_{c+d-1}(\A_F)$ whose Hecke eigenvalues are  equal
to $A$ at almost all finite places $v$ of $F$.
  \end{lemma}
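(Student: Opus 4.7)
My plan is to decompose $C$ and $D$ as isobaric sums of cuspidals, use a pole-of-$L$-function argument to force $\chi$ to occur among those cuspidal constituents, and then obtain $\pi_3$ by removing one copy of $\chi$ from $C \boxplus D$.

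Write $C = C_1 \boxplus \cdots \boxplus C_r$ and $D = D_1 \boxplus \cdots \boxplus D_s$ with each $C_i$ and $D_j$ cuspidal. I would then compute the order of the pole at $s=1$ of the partial Rankin--Selberg $L$-function $L^S(s, (C \boxplus D) \otimes \chi^{-1})$ in two ways. By the theory of Jacquet, Piatetski-Shapiro, and Shalika, this factors as $\prod_i L^S(s, C_i \times \chi^{-1}) \cdot \prod_j L^S(s, D_j \times \chi^{-1})$; each factor is entire unless the corresponding cuspidal constituent is a Gr\"ossencharacter equal to $\chi$, in which case the factor is $\zeta_F^S(s)$ and contributes a simple pole at $s=1$. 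Thus the order of the pole at $s=1$ equals the multiplicity with which $\chi$ appears among $C_1,\dots,C_r,D_1,\dots,D_s$. On the other hand, the Hecke-eigenvalue identity $A \boxplus \chi = C \boxplus D$ produces the identity of partial Euler products
\[ L^S(s, A \otimes \chi^{-1}) \cdot \zeta_F^S(s) \;=\; L^S(s, (C \boxplus D) \otimes \chi^{-1}), \]
and the hypothesis that $L(\chi^{-1} A, s)$ is non-zero at $s=1$, together with the simple pole of $\zeta_F^S(s)$ there, shows that the left side has a pole of order at least one at $s=1$.

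Comparing the two computations forces $\chi$ to appear at least once as a cuspidal constituent of $C \boxplus D$. Removing one such occurrence yields an isobaric automorphic representation $\pi_3$ on $\GL_{c+d-1}(\A_F)$ with $\pi_3 \boxplus \chi = C \boxplus D$ as isobaric sums, and cancelling $\chi_v$ from both sides of the multiset identity $A_v \sqcup \{\chi_v\} = C_v \sqcup D_v$ at each unramified place $v$ gives $H(\pi_{3,v}) = A_v$, as required. The substantive step is the $L$-function comparison that injects $\chi$ into the cuspidal decomposition of $C \boxplus D$; once this is in hand, the rest is bookkeeping via strong multiplicity one.
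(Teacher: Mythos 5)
Your argument is correct and is essentially the paper's own proof: both twist $C \boxplus D$ by $\chi^{-1}$, use the fact that the partial standard $L$-function of a cuspidal constituent has a pole at $s=1$ exactly when that constituent is the (twisted) trivial character, and combine the hypothesis on $L(\chi^{-1}A,s)$ with the pole of $\zeta_F^S$ to force $\chi$ into the isobaric decomposition, after which $\pi_3$ is obtained by deleting one copy of $\chi$. You simply spell out the factorization and the final bookkeeping more explicitly than the paper does.
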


\begin{proof}
  Expand
  $\chi^{-1}(C \boxplus D)$ as an isobaric sum of cusp forms, and
  note that for any cusp form $\pi$ on $\GL_m(\A_F)$,
   $L(\pi,s)$, the partial $L$-function without regard to omitted set of places, has a pole at $s=1$ if and only if $m=1$, and $\pi = 1$. Therefore by what is given
  for $L(\chi^{-1}A, s)$, the isobaric sum decomposition of
  $\chi^{-1}(C \boxplus D)$ in terms of cusp forms
  must contain the trivial representation of $\GL_1(\A_F)$, omitting which from $\chi^{-1}(C \boxplus D)$ defines $\chi^{-1}A$ as an automorphic
  representation. 
  \end{proof}

\begin{lemma}\label{lemma1}
Suppose $\pi_1$ is a cuspidal automorphic representation on $\GL_{n}(\A_F)$ and $\pi_2$ is a cuspidal automorphic representation on $\GL_{n+2}(\A_F)$ such 
that at almost all unramified places of $\pi_1$ and $\pi_2$,
$H(\pi_{1,v})\subseteq H(\pi_{2,v})$.
Let $\omega_1$ (resp. $\omega_2$) be the central character of 
$\pi_1$ (resp. $\pi_2$) which is a Gr\"ossencharacter on $\GL_1(\A_F)$. Suppose that 
$\Lambda^2(\pi_2),  \Sym^2(\pi_1)$ are known to be automorphic.
Then, 
\begin{enumerate}
\item    The Rankin product $\pi_1 \boxtimes \pi_2$ is automorphic.
\item We have the isobaric decomposition of automorphic representations:
\[  \pi_2 \boxtimes \pi_1 \boxplus \omega_2/ \omega_1 = \Lambda^2(\pi_2) \boxplus \Sym^2(\pi_1).\]
\end{enumerate}
 \end{lemma}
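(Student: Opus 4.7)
The plan is to work locally first: at each unramified place $v$, the containment $H(\pi_{1,v}) \subseteq H(\pi_{2,v})$ allows us to write $H(\pi_{2,v}) = H(\pi_{1,v}) \sqcup A_v$ where $A_v = \{\alpha_v, \beta_v\}$ is a pair of nonzero complex numbers. Comparing determinants of the diagonal Satake matrices, the product $\alpha_v \beta_v$ equals $\omega_2(v)/\omega_1(v)$. Thus the collection $A = \{A_v\}$ satisfies $\Lambda^2(A) = \omega_2/\omega_1$ at almost all places.

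Next I would perform the expected formal identities on exterior and symmetric powers. Writing $\pi_2 \sim \pi_1 \boxplus A$ at the level of Hecke eigenvalues, we get
\[
\Lambda^2(\pi_2) \;=\; \Lambda^2(\pi_1) \;\boxplus\; (\pi_1 \boxtimes A) \;\boxplus\; \Lambda^2(A) \;=\; \Lambda^2(\pi_1) \;\boxplus\; (\pi_1 \boxtimes A) \;\boxplus\; (\omega_2/\omega_1),
\]
while
\[
\pi_1 \boxtimes \pi_2 \;=\; \pi_1 \boxtimes \pi_1 \;\boxplus\; \pi_1 \boxtimes A \;=\; \Sym^2(\pi_1) \;\boxplus\; \Lambda^2(\pi_1) \;\boxplus\; (\pi_1 \boxtimes A).
\]
Subtracting the first decomposition from the second and rearranging yields the identity of Hecke eigenvalue collections
\[
\pi_1 \boxtimes \pi_2 \;\boxplus\; (\omega_2/\omega_1) \;=\; \Sym^2(\pi_1) \;\boxplus\; \Lambda^2(\pi_2),
\]
which is exactly claim (2) at the level of collections of Hecke parameters. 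Since the right-hand side is automorphic by hypothesis, this identity is the starting point for (1).

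To promote this to an isobaric identity of automorphic representations and to conclude that $\pi_1 \boxtimes \pi_2$ is itself automorphic, I would apply Lemma \ref{lemma0} with $C \boxplus D = \Sym^2(\pi_1) \boxplus \Lambda^2(\pi_2)$, $\chi = \omega_2/\omega_1$, and $A = \pi_1 \boxtimes \pi_2$. The hypothesis to verify is that the partial $L$-function
\[
L\!\left(s,\, (\omega_1/\omega_2) \cdot (\pi_1 \boxtimes \pi_2)\right) \;=\; L\!\left(s,\, \pi_1 \times (\pi_2 \otimes \omega_1/\omega_2)\right)
\]
admits meromorphic continuation and is nonvanishing at $s=1$. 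This is precisely the Rankin--Selberg $L$-function of two cuspidal representations (the twist of $\pi_2$ by a character remains cuspidal), and the required analytic properties are supplied by Jacquet--Piatetski-Shapiro--Shalika together with the non-vanishing theorem of Jacquet--Shalika. Lemma \ref{lemma0} then produces an automorphic representation on $\GL_{n(n+2)}(\A_F)$ whose Hecke parameters agree with those of $\pi_1 \boxtimes \pi_2$ at almost all places, proving (1); and (2) then follows from strong multiplicity one applied to the displayed eigenvalue identity.

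The step I expect to be trivial is the exterior/symmetric power bookkeeping; the substantive input is really the conjunction of automorphy of $\Sym^2(\pi_1)$ and $\Lambda^2(\pi_2)$ (assumed) with the analytic properties of Rankin--Selberg $L$-functions, via Lemma \ref{lemma0}. There is no real obstacle beyond carefully checking that the Hecke-eigenvalue manipulations make sense as collections and that the cuspidality of $\pi_2 \otimes (\omega_1/\omega_2)$ is preserved under the character twist, which is immediate.
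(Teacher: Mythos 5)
Your proposal is correct and follows essentially the same route as the paper: the same exterior/symmetric power bookkeeping (the paper phrases it as the identity $V\otimes W + \Lambda^2(A) = \Lambda^2(V) + \Sym^2(W)$ for $V = W + A$ with $\dim A = 2$, which is your computation after subtraction), followed by the same application of Lemma \ref{lemma0} with $\chi = \omega_2/\omega_1$ and the nonvanishing of the Rankin--Selberg $L$-function $L(s,\pi_1\times\pi_2\otimes\omega_1/\omega_2)$ at $s=1$ (the paper cites Theorem 5.2 of Shahidi for this). No gaps.
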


\begin{proof}
    We first prove the identity
  expressed in (2), i.e., 
    that the two sides
  have the same Hecke eigenvalues at almost all the primes of $F$. This task is made more transparent by looking at vector
  spaces $V,W, A$ with $V=W+ A$ with $A$ two dimensional, and noting the identity:
 
  \begin{eqnarray*} V \otimes W + \Lambda^2 (A) & = & W \otimes W + A \otimes W+ \Lambda^2 (A)  \\
    & = & \Lambda^2(W)  + A \otimes W +\Lambda^2 (A)  + \Sym^2(W) \\
    & = & \Lambda^2(V) +  \Sym^2(W).
  \end{eqnarray*}

  Now, Lemma \ref{lemma0} proves the automorphy of $\pi_2 \boxtimes \pi_1$ since its $L$-function is known to be entire and non-vanishing
  on the line Re$(s) =1$ by the Rankin-Selberg theory, see Theorem 5.2 of Shahidi \cite{Sha}. \end{proof}

\begin{lemma}\label{lemma2}
  Suppose $\pi_1, \pi_2,\pi_3$ are cuspidal automorphic representations on $\GL_{n_i}(\A_F)$ (for $i=1,2,3$).
  Suppose that the Rankin products $\pi_1 \boxtimes \pi_2$ and $\pi_2 \boxtimes \pi_3$ are known to be automorphic.
  Then in the isobaric sum decomposition of $\pi_1 \boxtimes \pi_2$, $\pi_3^\vee \subset \pi_1 \boxtimes \pi_2$ if and only if
  in the isobaric sum decomposition of $\pi_2 \boxtimes \pi_3$, $\pi_1^\vee \subset \pi_2 \boxtimes \pi_3$.
\end{lemma}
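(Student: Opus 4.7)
The plan is to interpret both inclusion statements as conditions on the order of pole at $s=1$ of the same triple product $L$-function $L(s,\pi_1\times\pi_2\times\pi_3)$, and then use the symmetry of this triple product in the three arguments.

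First, I would invoke the standard consequence of Jacquet--Shalika's theory of Rankin--Selberg $L$-functions: for cuspidal automorphic representations $\sigma$ on $\GL_a(\A_F)$ and $\tau$ on $\GL_b(\A_F)$, the (partial) Rankin--Selberg $L$-function $L(s,\sigma\times\tau)$ has a pole at $s=1$ if and only if $\tau\cong\sigma^\vee$, and in that case the pole is simple. By additivity of $L$-functions along isobaric sums, if $\Pi=\sigma_1\boxplus\cdots\boxplus\sigma_r$ is an isobaric automorphic representation expressed as a sum of cuspidal constituents, then the order of pole of $L(s,\Pi\times\tau)$ at $s=1$ equals the number of indices $j$ with $\sigma_j\cong\tau^\vee$. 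In particular, a cuspidal $\tau$ appears as an isobaric summand of $\Pi$ if and only if $L(s,\Pi\times\tau^\vee)$ has a pole at $s=1$.

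Next I would apply this to $\Pi=\pi_1\boxtimes\pi_2$, which is automorphic by hypothesis. Then $\pi_3^\vee$ appears as an isobaric summand of $\pi_1\boxtimes\pi_2$ if and only if $L(s,(\pi_1\boxtimes\pi_2)\times\pi_3)$ has a pole at $s=1$. Since the Hecke eigenvalues of $\pi_1\boxtimes\pi_2$ at almost all unramified places are exactly $H(\pi_{1,v})\boxtimes H(\pi_{2,v})$, the resulting partial $L$-function agrees, up to finitely many Euler factors, with the triple product partial $L$-function $L(s,\pi_1\times\pi_2\times\pi_3)$ defined via the tensor product of the local Satake parameters. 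This local computation is routine and does not affect the presence or absence of a pole at $s=1$.

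By the symmetric role of the three representations in the triple product $L$-function, the same argument applied with the grouping $\pi_2\boxtimes\pi_3$ shows that $\pi_1^\vee$ is an isobaric summand of $\pi_2\boxtimes\pi_3$ if and only if $L(s,(\pi_2\boxtimes\pi_3)\times\pi_1)$ has a pole at $s=1$, which (again up to finitely many Euler factors) coincides with $L(s,\pi_1\times\pi_2\times\pi_3)$. Thus both conditions in the lemma are equivalent to a pole of this common triple product $L$-function at $s=1$, and hence to each other. The only subtle point, and the main thing to check carefully, is that the partial $L$-function of the isobaric product $\pi_1\boxtimes\pi_2$ twisted by $\pi_3$ does indeed match the triple product $L$-function at almost all places; this is straightforward from the definition of Hecke eigenvalues of an isobaric sum.
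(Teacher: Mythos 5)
Your argument is correct and is essentially the paper's own proof: both reduce each inclusion to the existence of a pole at $s=1$ of the common triple product $L$-function $L(s,\pi_1\times\pi_2\times\pi_3)$, using the Jacquet--Shalika criterion applied to the automorphic Rankin products $\pi_1\boxtimes\pi_2$ and $\pi_2\boxtimes\pi_3$. Your write-up just spells out the routine matching of Satake parameters that the paper leaves implicit.
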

\begin{proof}
  Since the Rankin products $\pi_1 \boxtimes \pi_2$ is given to be automorphic,  by Jacquet-Shalika, the
  L-function,
  \[L(s, \pi_1 \boxtimes \pi_2 \boxtimes \pi_3),\]
  has a pole at $s =1$ if and only if $\pi_3^\vee \subset \pi_1 \boxtimes \pi_2$. Same triple-product L-function
  dictates $\pi_1^\vee \subset \pi_2 \boxtimes \pi_3$.
  \end{proof}

Besides the strong multiplicity one theorem of Jacquet-Shalika, we will use the symmetric square lift of Gelbart-Jacquet cf. \cite{GJ} which we state in the form we will use. The work of Gelbart-Jacquet was to establish the symmetric square lift from $\GL_2$ to
$\GL_3$; for characterising the image of the
symmetric square lift,  we refer to \cite{Ra1}.

\begin{thm} \label{GJ}
  Let $\pi_2$ be a cuspidal automorphic representation of $\PGL_3(\A_F)$ with $\pi_2 \cong \pi_2^\vee$.
  Then $\pi_2$ arises as the adjoint lift of an automorphic representation $\pi$ of
  $\GL_2(\A_F)$, i.e., 
  \[ \pi_2 \cong \Ad( \pi) = \omega^{-1} \otimes \Sym^2(\pi),\]
  where $\omega$ is the central character of $\pi$, a  Gr\"ossencharacter on $\GL_1(\A_F)$. 
  \end{thm}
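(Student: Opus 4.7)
The plan is to produce an automorphic representation $\pi$ of $\GL_2(\A_F)$ whose adjoint lift is $\pi_2$, via a combination of the converse theorem for $\GL_2$ and a local construction of the components $\pi_v$ from $\pi_{2,v}$. The conceptual motivation is that self-duality of $\pi_2$ together with triviality of its central character forces the (conjectural) Langlands parameter of $\pi_2$ to take values in $\SO_3(\C) \cong \PGL_2(\C)$, which is the dual group of $\SL_2$. Thus functoriality predicts that $\pi_2$ descends to a representation on $\GL_2$, and the job is to make this descent effective.

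First I would construct a candidate $\pi = \otimes_v \pi_v$. At each unramified place $v$, the Satake parameters of $\pi_{2,v}$ are of the form $(\alpha_v, 1, \alpha_v^{-1})$ by self-duality and triviality of the central character; I would take $\pi_v$ to be the unramified principal series of $\GL_2(F_v)$ whose Satake parameters $(\beta_v, \omega_v \beta_v^{-1})$ satisfy $\beta_v^2 \omega_v^{-1} = \alpha_v$, where $\omega$ is a Gr\"ossencharacter playing the role of the central character of the hoped-for $\pi$. The sign ambiguity in $\beta_v$ can be resolved, up to an overall quadratic twist, by a coherent global choice. At ramified places the local $\pi_v$ is defined via a local theta correspondence for the dual pair $(\O(3), \SL_2)$, exploiting the isomorphism $\SO(3) \cong \PGL_2$ to transfer $\pi_{2,v}$ down.

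Next I would invoke the converse theorem of Jacquet-Langlands for $\GL_2$: it suffices to verify that for every Gr\"ossencharacter $\chi$ of $F$, the twisted $L$-function $L(s, \pi \otimes \chi)$ is entire, bounded in vertical strips, and satisfies the expected functional equation. The analytic input comes from the Rankin-Selberg theory of Jacquet-Shalika applied to $\pi_2$: the decomposition $\pi \otimes \pi = \Sym^2(\pi) \oplus (\det \pi)$ and the matching of Satake parameters allow one to rewrite quantities built from $L(s, \pi \otimes \chi)$ in terms of $L(s, \pi_2 \otimes \chi^2 \omega^{-1})$ and abelian $L$-functions, whose analytic properties are standard. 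Granting automorphy of $\pi$, strong multiplicity one then compares Hecke eigenvalues at unramified places and identifies $\omega^{-1}\Sym^2(\pi)$ with $\pi_2$, giving the desired adjoint-lift identity.

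The main obstacle is the converse-theorem analysis itself: one must coherently resolve the square-root ambiguity in the Satake parameters on a global scale, handle ramified places via the theta correspondence rather than by a naive guess, and verify niceness of the entire family of twisted $L$-functions without circularity (since we do not yet know that $\pi$ is automorphic). This analytic control, rather than the formal representation-theoretic identities in the Grothendieck group, is the genuinely hard part, and is essentially the content of the Gelbart-Jacquet paper \cite{GJ}.
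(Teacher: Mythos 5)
First, a point of comparison: the paper does not prove this statement at all --- it is quoted as the descent (surjectivity) half of the Gelbart--Jacquet correspondence and used as a black box, with \cite{GJ} as the proof. So the only question is whether your sketch, read as a proof, would actually work, and it has a genuine gap at the converse-theorem step. The Jacquet--Langlands converse theorem for $\GL_2$ requires analytic control of the \emph{standard} degree-two twisted $L$-functions $L(s,\pi\otimes\chi)$ of the candidate, whose unramified local factors are governed by the traces $\beta_v+\omega_v\beta_v^{-1}$. The identity $\pi\otimes\pi=\Sym^2(\pi)\oplus\det(\pi)$ only expresses the degree-four object $L(s,\pi\times\pi\otimes\chi)$ in terms of $L(s,\pi_2\otimes\omega\chi)$ and abelian $L$-functions; it gives nothing about $L(s,\pi\otimes\chi)$ itself. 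Concretely, the data of $\pi_{2,v}$ and $\omega_v$ determine only
\[
\bigl(\beta_v+\omega_v\beta_v^{-1}\bigr)^2=\omega_v\bigl(\alpha_v+2+\alpha_v^{-1}\bigr),
\]
never the trace itself, and one cannot extract the meromorphic continuation and functional equation of a ``square root'' of an $L$-function from those of its square. So the analytic input you propose to feed into the converse theorem is simply not available from $\pi_2$, and the argument fails exactly at the step you yourself identify as the crux.

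This is not a presentational defect but the reason the actual descent cannot be run as a $\GL_2$ converse-theorem argument powered by $L$-functions of $\pi_2$. What one can extract from $\pi_2$ is that $\Lambda^2\pi_2\cong\pi_2^\vee\cong\pi_2$, so $L(s,\pi_2\times\pi_2)=L(s,\pi_2,\Sym^2)L(s,\pi_2)$ forces a pole of $L(s,\pi_2,\Sym^2)$ at $s=1$, i.e.\ $\pi_2$ is of orthogonal type; Gelbart and Jacquet then descend through the metaplectic double cover $\widetilde{\SL}_2$ and the Shimura correspondence, where the square-root ambiguity in the Satake parameters is resolved structurally rather than by a ``coherent global choice.'' Relatedly, your ramified-place construction via the dual pair $(\O(3),\SL_2)$ is off as stated: since $3$ is odd this pair pairs representations of $\O(3)$ with the metaplectic cover, not with $\GL_2$ or $\PGL_2$ directly, and $\pi_{2,v}$ lives on $\GL_3(F_v)$, not on an orthogonal group. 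In short, your sketch correctly identifies the target of the proof but the mechanism proposed to reach it does not close; for the purposes of this paper the correct ``proof'' is the citation to \cite{GJ}.
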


\begin{cor} \label {cor1}
  Let $\pi_2$ be a cuspidal automorphic representation of $\GL_3(\A_F)$ with $\pi_2 \cong \chi \otimes \pi_2^\vee$ for $\chi$
  a  Gr\"ossencharacter on $\GL_1(\A_F)$. 
  Then $\pi_2$ can be written as 
  \[ \pi_2 =  \lambda \otimes \Sym^2(\pi),\]
  where $\pi$ is a cuspidal automorphic representation of $\GL_2(\A_F)$, and $\lambda$ a  Gr\"ossencharacter on $\GL_1(\A_F)$. 
\end{cor}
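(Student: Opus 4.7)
The plan is to reduce directly to Gelbart--Jacquet (Theorem \ref{GJ}) by finding a single character twist that simultaneously makes $\pi_2$ self-dual and trivializes its central character. Write $\omega_2$ for the central character of $\pi_2$. Comparing central characters on both sides of $\pi_2 \cong \chi \otimes \pi_2^\vee$ gives $\omega_2 = \chi^3 \omega_2^{-1}$, hence the crucial numerical relation $\omega_2^2 = \chi^3$.

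Next I would set $\mu := \chi \cdot \omega_2^{-1}$. A direct computation using $\omega_2^2 = \chi^3$ shows that this Gr\"ossencharacter satisfies
\[ \mu^2 = \chi^2 \omega_2^{-2} = \chi^2 \cdot \chi^{-3} = \chi^{-1}, \qquad \mu^3 = \chi^3 \omega_2^{-3} = \omega_2^2 \cdot \omega_2^{-3} = \omega_2^{-1}. \]
The first identity ensures that $\mu \otimes \pi_2$ is self-dual: using $\pi_2^\vee \cong \chi^{-1} \otimes \pi_2$, we have
\[ (\mu \otimes \pi_2)^\vee = \mu^{-1} \otimes \pi_2^\vee = \mu^{-1}\chi^{-1} \otimes \pi_2 = \mu \otimes \pi_2. \]
The second identity ensures that $\mu \otimes \pi_2$ has trivial central character, since the central character of a twist of a $3$-dimensional representation by $\mu$ multiplies by $\mu^3$, giving $\mu^3 \omega_2 = 1$.

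Thus $\mu \otimes \pi_2$ is a self-dual cuspidal automorphic representation of $\PGL_3(\A_F)$, and Theorem \ref{GJ} produces a cuspidal automorphic representation $\pi$ of $\GL_2(\A_F)$ with central character $\omega$ such that $\mu \otimes \pi_2 = \omega^{-1} \otimes \Sym^2(\pi)$. Untwisting gives $\pi_2 = \lambda \otimes \Sym^2(\pi)$ with $\lambda = \mu^{-1}\omega^{-1} = \omega_2 \chi^{-1} \omega^{-1}$, completing the proof. There is no serious obstacle here once the correct $\mu$ is guessed; the only subtle point is that a \emph{single} Gr\"ossencharacter can remove both the central character and the self-duality twist simultaneously, which is possible precisely because of the compatibility $\omega_2^2 = \chi^3$ forced by the hypothesis.
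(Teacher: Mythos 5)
Your proof is correct and follows essentially the same route as the paper: derive $\omega_2^2=\chi^3$, twist by $\mu=\chi/\omega_2$, and invoke Gelbart--Jacquet. The only (minor, and welcome) difference is that you compute directly that $\mu^3\omega_2=1$, so the single twist by $\mu$ already trivializes the central character, whereas the paper only observes that the central character of $\mu\otimes\pi_2$ is quadratic and performs one further twist by it.
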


\begin{proof} Let $\omega_2$ be the central character of $\pi_2$. Comparing the central characters for the given isomorphism:
  \[\pi_2 \cong \chi \otimes \pi_2^\vee, \tag{1}\]
  it follows that,
  \[ \omega_2^2 = \chi^3.\]
  Therefore for $\mu = \chi/\omega_2$, the isomorphism in (1) can be rewritten as:
  \[\pi_2 \cong \mu^{-2} \otimes \pi_2^\vee, \tag{2}\]
  or,
  \[(\mu \otimes \pi_2) \cong (\mu \otimes \pi_2)^\vee. \tag{3}\]

  Therefore, the representation $\mu \otimes \pi_2$ of $\GL_3(\A_F)$ is selfdual.
  Comparing the central characters
  on the two sides of equation (3), we find that the central character $\omega$ of the representation $\mu \otimes \pi_2$ of $\GL_3(\A_F)$ is quadratic.
  Twisting the representation $\mu \otimes \pi_2$ of $\GL_3(\A_F)$ by $\omega$,
  we find that the representation $(\omega\mu) \otimes \pi_2$ of $\GL_3(\A_F)$ is both selfdual
  and of trivial central character, so Theorem \ref{GJ} applies, proving that $\pi_2$ is a symmetric square up to a twist.
  \end{proof}

\section{The results}
We introduce the following notation keeping 
Question \ref{conj} in mind.
Suppose $\pi_1$ is a cuspidal automorphic representation on $\GL_{n}(\A_F)$ and $\pi_2$ is a cuspidal automorphic representation on $\GL_{m}(\A_F)$ such 
that at almost all unramified places of $\pi_1$ and $\pi_2$,
$H(\pi_{1,v})\subseteq H(\pi_{2,v})$, we write $\pi_1 \preceq \pi_2$.

We observe that we may twist the pair $(\pi_1,\pi_2)$ appearing in Question \ref{conj} by a Gr\"ossencharacter. 
Accordingly, in the following proposition that provides an affirmative answer to
Question \ref{conj} for $n=1$ we may assume that $\pi_1=1$.

\begin{prop} \label{case1}
Let $1$ denote the trivial representation of $\GL_1(\A_F)$ and suppose that $\pi_2$ is a cuspidal automorphic representation on $\GL_3(\A_F)$ such that $1\preceq\pi_2$. Then $\pi_2$ is a self-dual representation of $\PGL_3(\A_F)$, and   arises as
  $\omega^{-1}\cdot \Sym^2(\pi) $ (the adjoint lift)
  of   a cuspidal automorphic form $\pi$ on $\GL_2(\A_F)$ with central character
  $\omega: \A_F^\times/F^\times \rightarrow \C^\times$.
  \end{prop}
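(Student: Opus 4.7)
The plan is to apply Lemma \ref{lemma1} with $\pi_1 = 1$ (the trivial character on $\GL_1(\A_F)$) and $\pi_2$ the given cuspidal representation on $\GL_3(\A_F)$. Before I can invoke Lemma \ref{lemma1} I need to know that $\Sym^2(\pi_1)$ and $\Lambda^2(\pi_2)$ are automorphic: the first is trivial since $\Sym^2(1) = 1$, and the second is automatic because for a three-dimensional representation we have the natural isomorphism $\Lambda^2(\pi_2) \cong \omega_2 \otimes \pi_2^\vee$, so $\Lambda^2(\pi_2)$ is automorphic without any appeal to Kim's result.

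With $\omega_1 = 1$ and $\omega_2$ the central character of $\pi_2$, Lemma \ref{lemma1}(2) gives the identity of isobaric sums
\[ \pi_2 \boxplus \omega_2 \;=\; \Lambda^2(\pi_2) \boxplus \Sym^2(1) \;=\; (\omega_2 \otimes \pi_2^\vee) \boxplus 1. \]
The strong multiplicity one theorem of Jacquet--Shalika forces a matching of cuspidal constituents on either side. The two $\GL_3$-cuspidal pieces are $\pi_2$ and $\omega_2 \otimes \pi_2^\vee$, and the two characters are $\omega_2$ and $1$; matching these I conclude that $\omega_2 = 1$ and $\pi_2 \cong \pi_2^\vee$. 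Hence $\pi_2$ is a self-dual cuspidal representation of $\PGL_3(\A_F)$.

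At this point the result follows immediately from the Gelbart--Jacquet theorem (Theorem \ref{GJ}): any self-dual cuspidal representation of $\PGL_3(\A_F)$ is the adjoint lift $\omega^{-1} \otimes \Sym^2(\pi)$ of a cuspidal $\pi$ on $\GL_2(\A_F)$ with central character $\omega$. The only genuine input is Lemma \ref{lemma1}, which in turn rests on Rankin--Selberg (to verify automorphy of $\pi_2 \boxtimes 1 = \pi_2$, which here is free) plus the combinatorial identity built into that lemma; there is no real obstacle, so the argument is essentially a three-line deduction from the tools already developed in the Preliminaries.
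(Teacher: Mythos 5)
Your argument is correct and follows essentially the same route as the paper: apply Lemma \ref{lemma1} with $\pi_1=1$, use strong multiplicity one on the resulting isobaric identity to get $\omega_2=1$ and $\pi_2\cong\pi_2^\vee$, then invoke Gelbart--Jacquet. Your only (harmless) variation is substituting $\Lambda^2(\pi_2)\cong\omega_2\otimes\pi_2^\vee$ up front, whereas the paper first concludes $\Lambda^2(\pi_2)=\pi_2$ and then deduces self-duality; you also make explicit the verification of the automorphy hypotheses of the lemma, which the paper leaves implicit.
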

\begin{proof}
  The proof will be a simple consequence of the strong multiplicity one theorem of Jacquet-Shalika recalled in the beginning of this paper
  and Theorem \ref{GJ} due to Gelbart-Jacquet. 
  Let $\omega_2$ be the central character of $\pi_2$ which is a Gr\"ossencharacter on $\GL_1(\A_F)$.
By Lemma \ref{lemma1} in this case for $(\pi_1,\pi_2)=(1, \pi_2)$, it follows that
     \[ \Lambda^2(\pi_2) \boxplus 1 = \pi_2 \boxplus \omega_2.\]

     Therefore, by the strong multiplicity one theorem, we deduce that:

     \begin{enumerate}
\item     $\omega_2 = 1$,
\item $\Lambda^2(\pi_2) = \pi_2$. \end{enumerate}
     
     By (1) and (2), we find that
     \[\pi_2 \cong \pi_2^\vee.\]
     Therefore, by Theorem \ref{GJ} due to Gelbart-Jacquet,
     $\pi_2$ arises as the adjoint  lift from
     a cuspidal automorphic form $\pi$ on $\GL_2(\A_F)$, i.e., \[\pi_2 = \omega^{-1}\cdot \Sym^2(\pi),\]
proving the proposition.     \end{proof}

The following proposition provides an affirmative answer to Question \ref{conj} for $n=2$.

\begin{prop} \label{case2}
  Suppose that $\pi_1$ is a cuspidal  automorphic representation on $\GL_2(\A_F)$ with central
  character $\omega_1: \A_F^\times/F^\times \rightarrow \C^\times$,
  and that $\pi_2$ is a cuspidal automorphic representation on $\GL_4(\A_F)$, and that
  $\pi_1\preceq \pi_2$. Then,
\begin{enumerate}
\item $\pi_1$ cannot be CM (a CM representation is one defined using a Gr\"ossencharacter on a quadratic extension $E$ of $F$).

\item $\pi_2 = \omega_1^{-1} \otimes \Sym^{3} (\pi_1)$.
  \end{enumerate}

  \end{prop}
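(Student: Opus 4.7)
The plan is to apply Lemma \ref{lemma1} to the pair $(\pi_1,\pi_2)$---noting that $\Sym^2(\pi_1)$ is automorphic by Gelbart-Jacquet (Theorem \ref{GJ}) and $\Lambda^2(\pi_2)$ is automorphic by Kim---to obtain the identity
\[ \pi_2 \boxtimes \pi_1 \boxplus \omega_2/\omega_1 \;=\; \Lambda^2(\pi_2) \boxplus \Sym^2(\pi_1) \]
of automorphic representations on $\GL_9(\A_F)$, where $\omega_2$ denotes the central character of $\pi_2$. The proof will then proceed by matching cuspidal summands on the two sides, with part (1) disposed of by contradiction and part (2) following by identification.

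For part (1), I argue by contradiction. Suppose $\pi_1$ is CM, arising by automorphic induction from a Gr\"ossencharacter $\chi$ of a quadratic extension $E/F$. A standard base-change computation gives the isobaric decomposition $\Sym^2(\pi_1) = \mathrm{AI}_{E/F}(\chi^2) \boxplus \chi\big|_{\A_F^\times}$, with $\mathrm{AI}_{E/F}(\chi^2)$ cuspidal on $\GL_2(\A_F)$. Comparing multiplicities of this $\GL_2$-cuspidal on the two sides of the identity above forces it to occur in the isobaric decomposition of $\pi_2 \boxtimes \pi_1$. Lemma \ref{lemma2}---applicable since $\pi_1 \boxtimes \mathrm{AI}_{E/F}(\chi^2)^\vee$ is automorphic by Ramakrishnan's $\GL_2 \times \GL_2$ functoriality---then produces $\pi_2^\vee$ as a cuspidal summand of $\pi_1 \boxtimes \mathrm{AI}_{E/F}(\chi^2)^\vee$. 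A further base-change calculation reveals this Rankin product to be an isobaric sum of cuspidal representations of $\GL_2(\A_F)$ and characters, which admits no cuspidal $\GL_4$-summand, contradicting the cuspidality of $\pi_2$.

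With (1) in hand, $\Sym^2(\pi_1)$ is cuspidal on $\GL_3(\A_F)$, and the same multiplicity comparison places $\Sym^2(\pi_1)$ inside the isobaric decomposition of $\pi_2 \boxtimes \pi_1$. Lemma \ref{lemma2} now gives $\pi_2^\vee$ as a cuspidal summand of $\pi_1 \boxtimes \Sym^2(\pi_1)^\vee = \omega_1^{-2} \otimes \bigl(\pi_1 \boxtimes \Sym^2(\pi_1)\bigr)$, using $\Sym^2(\pi_1)^\vee = \omega_1^{-2}\otimes\Sym^2(\pi_1)$. The $\GL_2$ Clebsch-Gordan identity
\[ \pi_1 \boxtimes \Sym^2(\pi_1) \;=\; \Sym^3(\pi_1) \boxplus \omega_1 \otimes \pi_1, \]
valid at unramified Satake parameters and promoted to an identity of automorphic representations by the automorphy of $\Sym^3(\pi_1)$ due to Kim-Shahidi, shows that $\pi_2^\vee$ must equal $\omega_1^{-2}\otimes\Sym^3(\pi_1)$, since the other summand $\omega_1^{-1}\otimes\pi_1$ sits on $\GL_2(\A_F)$ while $\pi_2^\vee$ is cuspidal on $\GL_4(\A_F)$. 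Using $\Sym^3(\pi_1)^\vee = \omega_1^{-3}\otimes\Sym^3(\pi_1)$ then yields $\pi_2 = \omega_1^{-1}\otimes\Sym^3(\pi_1)$, as required.

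The main obstacle is the CM case of (1). The argument is in spirit a bookkeeping computation via base change along $E/F$, but the degenerate subcases (when $\chi^2$ or $\chi^3$ becomes Galois-invariant, so that the relevant automorphic inductions decompose further and multiplicities can shift) require individual attention to confirm that no cuspidal $\GL_4$-summand enters the $\GL_2 \times \GL_2$ Rankin product $\pi_1 \boxtimes \mathrm{AI}_{E/F}(\chi^2)^\vee$.
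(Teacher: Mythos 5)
Your argument is correct and follows the same skeleton as the paper's proof: the Lemma \ref{lemma1} identity, uniqueness of isobaric decompositions to force the relevant cuspidal constituent of $\Sym^2(\pi_1)$ into $\pi_2\boxtimes\pi_1$, Lemma \ref{lemma2} to convert this into $\pi_2^\vee\subset\pi_1\boxtimes(\cdot)^\vee$, and Clebsch--Gordan to conclude. Your part (2) is essentially the paper's, slightly streamlined: you never need the relation $\omega_2=\omega_1^2$, which the paper extracts from the fact that $\omega_2/\omega_1$ must sit inside $\Lambda^2(\pi_2)$. The one genuinely different step is the endgame of part (1). The paper first notes that $\Lambda^2(\pi_2)$ cannot contain a Gr\"ossencharacter (the left-hand side of the identity carries exactly one, already accounted for by the character constituent of $\Sym^2(\pi_1)$), then derives $\pi_2=\pi_1\boxtimes\pi_3^\vee$ and computes $\Lambda^2(\pi_1\boxtimes\pi_3^\vee)$ to exhibit a forbidden Gr\"ossencharacter. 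You instead observe that $\pi_1\boxtimes\mathrm{AI}_{E/F}(\chi^2)^\vee$ is a Rankin product of two representations dihedral with respect to the same quadratic field $E$, hence an isobaric sum of constituents of dimension at most $2$, leaving no room for the cuspidal $\pi_2^\vee$ on $\GL_4(\A_F)$. This is more direct and avoids the exterior-square computation.

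The degenerate subcase you flag is not actually a gap, and is dispatched by an observation the paper makes anyway: a Gr\"ossencharacter $\eta$ can occur in $\pi_2\boxtimes\pi_1$ only if $\pi_2\cong\eta\otimes\pi_1^\vee$, impossible since $4\neq 2$, so the left-hand side of the Lemma \ref{lemma1} identity contains exactly one Gr\"ossencharacter. If $\chi^2$ were Galois-invariant, $\Sym^2(\pi_1)$ would be a sum of three Gr\"ossencharacters and the identity would already be violated; hence $\mathrm{AI}_{E/F}(\chi^2)$ is automatically cuspidal and your main argument applies. (Galois-invariance of $\chi^3$ never enters your argument, so nothing needs checking there.)
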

\begin{proof}
    Let $\omega_2$ be the central character of $\pi_2$ which is a Gr\"ossencharacter on $\GL_1(\A_F)$.
  By Lemma \ref{lemma1},
  \[ \pi_2 \boxtimes \pi_1
  \boxplus \omega_2/ \omega_1 = \Lambda^2(\pi_2)
  \boxplus \Sym^2(\pi_1)
  \tag{1},\] 
  where all the terms appearing above are automorphic:  $\Lambda^2(\pi_2)$ by Kim \cite{Kim}, $\Sym^2(\pi_1)$ by Gelbart-Jacquet \cite{GJ},
  and $\pi_2 \boxtimes \pi_1$
  by Lemma \ref{lemma1}.

  We first assume that  $\pi_1$ is CM. Observe that since $\pi_1$ is a cusp form on
  $\GL_2(\A_F)$ and $\pi_2$ a cusp form on $\GL_4(\A_F)$, $\pi_2 \boxtimes \pi_1$ cannot contain any Gr\"ossencharacter.
  Therefore, by the isobaric decomposition (1), exactly one of the two terms $\Lambda^2(\pi_2)$ or  $\Sym^2(\pi_1)$ may contain a Gr\"ossencharacter.
  Since we have assumed that $\pi_1$ is CM, $\Sym^2(\pi_1)$ contains a Gr\"ossencharacter, and therefore, 
  $\Lambda^2(\pi_2)$ cannot contain a Gr\"ossencharacter if $\pi_1$ is CM.

  Since $\pi_1$ is CM, we can write,
  \[ \Sym^2(\pi_1) =\pi_3 \boxplus \chi_3,\]
  where $\chi_3$ is  a Gr\"ossencharacter, and $\pi_3$ must be cuspidal (because the left hand side of (1) has only one Gr\"ossencharacter
  in its isobaric decomposition). By (1), $\chi_3 = \omega_2/\omega_1$, and we can simplify (1) to
\[ \pi_2 \boxtimes \pi_1 = \Lambda^2(\pi_2)
  \boxplus \pi_3.\]
 From  Lemma \ref{lemma2}, it follows that,
  \[ \pi_2 = \pi_1 \boxtimes \pi_3^\vee.\]
  Therefore,
  \[ \Lambda^2(\pi_2)
  = [\Sym^2(\pi_1) \boxtimes \Lambda^2(\pi_3^\vee)]
  \boxplus [\Lambda^2(\pi_1) \boxtimes \Sym^2(\pi_3^\vee)]  .\]

  Since  $\Sym^2(\pi_1) =\pi_3\boxplus \chi_3$, therefore as
$\Lambda^2(\pi_3^\vee)$ is a Gr\"ossencharacter, we find that $\Lambda^2(\pi_2)$
  contains a Gr\"ossencharacter which is not allowed, proving that $\pi_1$ cannot be a CM form.

  Now we turn to the case when $\pi_1$ is not CM in which case it is known by Gelbart-Jacquet that
  $\Sym^2(\pi_1)$ is a cuspidal automorphic
    representation of $\GL_3(\A_F)$. Therefore by the strong multiplicity one theorem applied to (1),
    we make the following conclusions:

    \begin{enumerate}
      \item the character
        $\omega_2/ \omega_1: \A_F^\times/F^\times \rightarrow \C^\times$
        must belong to the isobaric sum decomposition of
        $\Lambda^2(\pi_2)$, in particular, $\pi_2 \cong \pi^\vee_2 \otimes (\omega_2/\omega_1)$, i.e.,
        $\pi_2$ has parameter in the symplectic similitude group, and considering the similitude factor, we find:
        \[ (\omega_2/\omega_1)^2 = \omega_2, \;\;\;  \text { i.e., } \omega_2 = \omega_1^2. \tag{2}\]

      \item
        $\Sym^2(\pi_1)$ must be contained in  the isobaric sum decomposition of
        $\pi_2 \boxtimes \pi_1$.
        \end{enumerate}

    Since by \cite{KS1}, \cite {KS2},    $\pi_2 \boxtimes \pi_1$ and    $\Sym^2(\pi_1) \boxtimes \pi_1$ are known to be
    automorphic, we can apply Lemma \ref{lemma2}, and conclude that:
    \[ \pi_2^\vee =   \pi_2 \otimes (\omega_1/\omega_2) \subset \pi_1 \boxtimes \Sym^2(\pi_1^\vee) = \pi_1 \boxtimes \omega_1^{-2} \boxtimes \Sym^2(\pi_1)   
    \tag{3}.\]
    It is easy to see that,
     \[\pi_1 \boxtimes \Sym^2(\pi_1) = (\omega_1 \otimes \pi_1) \boxplus  \Sym^3(\pi_1),\]
     therefore we can write (3) as:

     \[\pi_2 \otimes (\omega_1/\omega_2) \subset  (\omega_1^{-1} \otimes \pi_1) \boxplus \omega_1^{-2} \boxtimes \Sym^3(\pi_1). \tag{4} \]   
     Since $\pi_2$ is a cuspidal automorphic representation of $\GL_4(\A_F)$, 
 applying  the strong multiplicity one theorem to (4), the only option we have (after using (2)) is that:
       \[\pi_2 =  \omega_1^{-1} \otimes \Sym^3(\pi_1),\]
       proving the proposition.
\end{proof}

The following proposition provides an affirmative answer to  Question \ref{conj} for $n=3$. The proof of this proposition will use
the unproved cases of functorialty for $\Lambda^2(\pi_2)$ where $\pi_2$ is a cuspform on $\GL_5(\A_F)$, as well as $\Sym^6(\pi)$ for $\pi$ a cusp form
on $\GL_2(\A_F)$. It may be mentioned that
although automorphy  
of $\Sym^2(\pi_1)$ for $\pi_1$ a cuspform on $\GL_3(\A_F)$ is not known, in our context below, it will be applied to $\pi_1$
which is selfdual up to a twist, and hence is a symmetric square of a cuspform on $\GL_2(\A_F)$ up to a twist by Corollary
\ref{cor1} which allows one to conclude automorphy of $\Sym^2(\pi_1)$ using known cases of functoriality for $\Sym^4(\pi)$.

\begin{prop}\label{case3}
Suppose that $\pi_1$ is a cuspidal automorphic representation on $\GL_3(\A_F)$ and that $\pi_2$ is a cuspidal automorphic representation on $\GL_5(\A_F)$ such that $\pi_1\preceq\pi_2$. 
Then, there exists a cuspidal automorphic representation $\pi$ of $\GL_2(\A_F)$ of central character $\omega$
such that up to simultaneous twisting of the pair $(\pi_1,\pi_2)$ by a Gr\"ossencharacter, we have:
\begin{eqnarray*} \pi_1 & = & 
    \Sym^{2} (\pi),\\
\pi_2 & = & \omega^{-1} \otimes \Sym^{4} (\pi).\end{eqnarray*}
  \end{prop}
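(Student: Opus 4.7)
The plan is to follow the strategy of Propositions \ref{case1} and \ref{case2}. Applying Lemma \ref{lemma1} to the pair $(\pi_1,\pi_2)$ yields the basic isobaric identity
\begin{equation*}
  \pi_2 \boxtimes \pi_1 \boxplus (\omega_2/\omega_1) = \Lambda^2(\pi_2) \boxplus \Sym^2(\pi_1).
\end{equation*}
For this we need the automorphy of $\Lambda^2(\pi_2)$ on $\GL_5(\A_F)$ (one of the unproved cases of functoriality we allow), while automorphy of $\Sym^2(\pi_1)$ will be justified retroactively once we have shown that $\pi_1$ is selfdual up to twist.

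The first key step is a character count on the two sides. The Rankin product $\pi_2\boxtimes \pi_1$ cannot contain a Gr\"ossencharacter in its isobaric decomposition, since a character $\chi$ appearing there would force $\pi_2 \cong \chi\otimes \pi_1^\vee$, incompatible with $\dim \pi_2 = 5 \neq 3 = \dim \pi_1^\vee$. On the other hand, $\Lambda^2(\pi_2)$ cannot contain a Gr\"ossencharacter either, since such a character would arise from a nondegenerate alternating pairing on the irreducible $5$-dimensional representation $\pi_2$, impossible in odd dimension. Therefore the single character $\omega_2/\omega_1$ on the left must be absorbed by $\Sym^2(\pi_1)$ on the right, which implies $\pi_1 \cong (\omega_2/\omega_1)\otimes \pi_1^\vee$. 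Corollary \ref{cor1} then produces a Gr\"ossencharacter $\lambda$ and a cuspidal automorphic representation $\pi$ of $\GL_2(\A_F)$ with central character $\omega$ such that $\pi_1 = \lambda\otimes \Sym^2(\pi)$; moreover $\pi$ is necessarily non-CM, since $\Sym^2$ of a CM form fails to be cuspidal.

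Twisting both $\pi_1$ and $\pi_2$ by $\lambda^{-1}$, I may assume $\pi_1 = \Sym^2(\pi)$. A direct Satake computation gives the automorphic decomposition $\Sym^2(\pi_1) = \Sym^4(\pi) \boxplus \omega^2$, where $\Sym^4(\pi)$ is cuspidal on $\GL_5(\A_F)$ by Kim-Shahidi. Matching the unique characters on both sides of the identity from Lemma \ref{lemma1} yields $\omega_2/\omega_1 = \omega^2$, so that $\omega_2 = \omega^5$, and cancelling $\omega^2$ the identity simplifies to
\begin{equation*}
  \pi_2 \boxtimes \Sym^2(\pi) = \Lambda^2(\pi_2) \boxplus \Sym^4(\pi).
\end{equation*}

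To finish, $\Sym^4(\pi)$ is a cuspidal summand of $\pi_2 \boxtimes \Sym^2(\pi)$, and both Rankin products needed in Lemma \ref{lemma2} are automorphic under the standing hypotheses (the product $\Sym^2(\pi)\boxtimes(\Sym^4(\pi))^\vee$ requires the automorphy of $\Sym^6(\pi)$, the second unproved case we allow). Applying Lemma \ref{lemma2} with the triple $(\pi_2,\Sym^2(\pi),(\Sym^4(\pi))^\vee)$, and using the Clebsch-Gordan decomposition $\Sym^2(\pi)\boxtimes \Sym^4(\pi) = \Sym^6(\pi) \boxplus \omega\Sym^4(\pi) \boxplus \omega^2 \Sym^2(\pi)$ together with $(\Sym^4(\pi))^\vee = \omega^{-4}\Sym^4(\pi)$, I obtain
\begin{equation*}
  \pi_2^\vee \subset \omega^{-4}\Sym^6(\pi) \boxplus \omega^{-3}\Sym^4(\pi) \boxplus \omega^{-2}\Sym^2(\pi).
\end{equation*}
Since $\pi_2^\vee$ is cuspidal of dimension $5$, it can only match the unique $5$-dimensional summand $\omega^{-3}\Sym^4(\pi)$, and dualizing yields $\pi_2 = \omega^{-1}\Sym^4(\pi)$, as claimed. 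The main obstacle is the character-matching step: ruling out Gr\"ossencharacters from $\Lambda^2(\pi_2)$ rests on the odd-dimensional symplectic argument specific to $\GL_5$, and this is what makes the method sensitive to the dimensions $n=3$, $m=5$. The remainder is a bookkeeping exercise in Clebsch-Gordan combined with strong multiplicity one, granted the indicated functoriality inputs.
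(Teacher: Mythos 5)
Your overall architecture coincides with the paper's: show $\pi_1$ is selfdual up to twist using the fact that the (twisted) exterior square $L$-function of a cusp form on $\GL_5(\A_F)$ has no pole at $s=1$, invoke Corollary \ref{cor1} to write $\pi_1=\lambda\otimes\Sym^2(\pi)$, and then use Lemma \ref{lemma2} plus Clebsch--Gordan to trap $\pi_2$ inside $\pi_1^\vee\boxtimes\Sym^2(\pi_1)$. But your final step has a genuine gap. From
\[ \pi_2^\vee \subset \omega^{-4}\Sym^6(\pi)\boxplus\omega^{-3}\Sym^4(\pi)\boxplus\omega^{-2}\Sym^2(\pi)\]
you conclude that the cuspidal $\GL_5$ representation $\pi_2^\vee$ ``can only match the unique $5$-dimensional summand $\omega^{-3}\Sym^4(\pi)$.'' This silently presupposes that the isobaric decomposition of $\Sym^6(\pi)$ contains no cuspidal constituent on $\GL_5(\A_F)$, i.e.\ that its isobaric type is not $(5,2)$ or $(5,1,1)$. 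That is not obvious; it is precisely Proposition \ref{sym6} of the paper, whose proof occupies a separate section and proceeds by a case analysis of the tetrahedral, octahedral and icosahedral types, the last case resting on Ramakrishnan's Theorem A$'$ expressing $\Sym^5(\pi)$ as $\chi\,\pi\boxtimes\Sym^2(\pi')$. Without this input, your argument does not exclude $\pi_2^\vee$ from being a $\GL_5$ piece of $\omega^{-4}\Sym^6(\pi)$, and strong multiplicity one gives you nothing.

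Two secondary issues. First, you assert that $\Sym^4(\pi)$ is cuspidal ``by Kim--Shahidi,'' but their theorem gives cuspidality only when $\pi$ is not of dihedral, tetrahedral or octahedral type, and you have excluded only the dihedral (CM) case; in the tetrahedral case $\Sym^4(\pi)$ contains two Gr\"ossencharacters, so your ``unique character on each side'' matching would break and that case must be argued away rather than assumed away. The paper avoids this by obtaining the cuspidality of $\Sym^4(\pi)$ only at the very end, as an output of strong multiplicity one. Second, your opening application of Lemma \ref{lemma1} invokes the isobaric identity before the automorphy of $\Sym^2(\pi_1)$ is available; you flag this, and the repair is the one the paper makes: run the selfduality step purely at the level of partial $L$-functions (where the identity holds unconditionally and the pole at $s=1$ for $\chi=\omega_1/\omega_2$ must come from $L(s,\Sym^2(\pi_1)\otimes\omega_1/\omega_2)$), and only pass to isobaric decompositions after $\pi_1=\Sym^2(\pi)$ is established.
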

\begin{proof}
    Let $\omega_1$ (resp. $\omega_2$) be the central character of $\pi_1$ (resp. $\pi_2$) which is a Gr\"ossencharacter on $\GL_1(\A_F)$.
    By Lemma \ref{lemma1}, for any Gr\"ossencharacter $\chi$ on $F$:
\[  L(s, \pi_2 \times \pi_1 \times \chi) L(s, \omega_2/ \omega_1 \times \chi ) = L(s, [\Lambda^2(\pi_2) \oplus\Sym^2(\pi_1)]\times \chi), \tag{1} \]

  Since  $\pi_2$ is a cuspidal automorphic representation of $\GL_5(\A_F)$,
 it is known by Jacquet-Shalika, cf. \cite{JS}, that  $L(s, \Lambda^2(\pi_2))$
 cannot have a pole at $s=1$. Therefore
  for   $\chi=\omega_1/ \omega_2 $,  since the left hand side of the  product of $L$-functions  in (1) has a simple pole at $s=1$,
  right hand side of (1) too must have a simple pole, contributed therefore
  by  $L(s,\Sym^2(\pi_1) \otimes \omega_1/\omega_2)$. 
  In particular,
  $\pi_1^\vee \cong \pi_1 \boxtimes \omega_1/\omega_2$. By Corollary \ref{cor1}, such representations of $\GL_3(\A_F)$ arise as a twist of a symmetric square:
  \[ \pi_1 \cong \lambda \otimes \Sym^2(\pi),\]
  for a cuspidal automorphic representation $\pi$ of $\GL_2(\A_F)$ of central character $\omega$, and $\lambda$ a Gr\"ossencharacter on $\A_F^\times$. Twisting the pair $(\pi_1,\pi_2)$ by
  $\lambda^{-1}$, we assume that $\pi_1 \cong  \Sym^2(\pi)$.

  Since $\pi_1 = \Sym^2(\pi)$, and it is easy to see that,
    \[ \Sym^2(\pi_1)= \Sym^2(\Sym^2(\pi)) = \omega^2 + \Sym^4(\pi),\] 
    and therefore  by Kim, cf. \cite{Kim}, since $\Sym^4(\pi)$ is known to be automorphic, so is
    $\Sym^2(\pi_1)$. Since we are assuming that
    $\Lambda^2(\pi_2)$ is known to be automorphic,  Lemma \ref{lemma1} applies,
    allowing us to conclude that $\pi_2 \boxtimes \pi_1$ is automorphic and we have the
    isobaric decomposition:
    \[ \pi_2 \boxtimes \pi_1
    \boxplus \omega_2/ \omega_1 = \Lambda^2(\pi_2)
  \boxplus \Sym^2(\pi_1)
  \tag{2}.\] 
  
  Therefore,  by the strong multiplicity one theorem applied to (2),
    we conclude
        $\Sym^2(\pi_1)$ must be contained in  the isobaric sum decomposition of
        $\pi_2 \boxtimes \pi_1$ as a direct summand.

     Applying Lemma \ref{lemma2} to (2), we conclude that:
    \[ \pi_2 \subset \pi_1^\vee \boxtimes \Sym^2(\pi_1) \tag{3},\]
    again as a direct summand, since as we will see now, $\pi_1^\vee \boxtimes \Sym^2(\pi_1)$ is automorphic by
    our assumption that $\Sym^6(\pi)$ is automorphic.

    Since $\pi_1 = \Sym^2(\pi)$, and $ \Sym^2(\Sym^2(\pi)) = \omega^2 + \Sym^4(\pi),$ 
    we find that:
    \begin{eqnarray*} \pi_1 \boxtimes \Sym^2(\pi_1) & = & \Sym^2(\pi) \boxtimes (\omega^2 \boxplus \Sym^4(\pi)),\\ 
      & = & \omega^2 \Sym^2(\pi)   \boxplus  \Sym^2(\pi) \boxtimes \Sym^4(\pi)   ,\\
      & = & \omega^2 \Sym^2(\pi)\boxplus \omega^2 \Sym^2(\pi) \boxplus \omega \Sym^4(\pi) \boxplus \Sym^6(\pi),
      \end{eqnarray*}
    if particular, if $\Sym^6(\pi)$ is automorphic, so is $\pi_1^\vee \boxtimes \Sym^2(\pi_1)$.

    Since  $\pi_2 \subset \pi_1^\vee \boxtimes \Sym^2(\pi_1) = \omega^{-2} \pi_1 \boxtimes \Sym^2(\pi_1),$
    we find that:
    \[ \pi_2 \subset  \Sym^2(\pi)\boxplus  \Sym^2(\pi) \boxplus \omega^{-1} \Sym^4(\pi) \boxplus \omega^{-2}\Sym^6(\pi). \tag{4}\]
    Now $\pi_2$ is a cuspidal representation on $\GL_5(\A_F)$, and by Proposition \ref{sym6} proved in the next section,
    isobaric decomposition of $\Sym^6(\pi)$ cannot have a cuspidal representation of $\GL_5(\A_F)$. 
    Therefore applying  the strong multiplicity one theorem to (4), we find that the only option we have is that $\Sym^4(\pi)$
    is cuspidal, and 
            \[\pi_2 = \omega^{-1}\Sym^4(\pi),\]
       proving the proposition.
\end{proof}

\vskip 5pt

\begin{remark} \label{inv}
The identity proved in Lemma \ref{lemma1}:
  \[  \pi_2 \otimes \pi_1 +  \omega_2/ \omega_1 = \Lambda^2(\pi_2) + \Sym^2(\pi_1), \tag{1}\]
  holds, as the proof shows,  among any two representations $(\pi_1,V_1)$ and $(\pi_2,V_2)$
  of an abstract group $G$ when $\dim(\pi_2) - \dim(\pi_1)=2$ and when
  for any $g \in G$, the set of eigenvalues
  of $\pi_1(g)$ acting on $V_1$, counted with multiplicity, is contained in the set of eigenvalues of $\pi_2(g)$ acting on $V_2$, counted with multiplicity.
  All the proofs in this section of Propositions \ref{case1}, \ref {case2}, \ref{case3} giving an affirmative answer to Question \ref{conj} for the pair $(\GL_n,\GL_{n+2})$ for $n=1,2,3$ use this identity (1) crucially,
  answering Question \ref{conj}
  for any group $G$, and then we had to carefully transport that proof (for arbitrary group $G$)
  to the world of automorphic forms using the
  strong multiplicity one theorem about isobaric decomposition of automorphic forms, and instances of functoriality.  However,
  when in section \ref{conn}, we answer Question \ref{conj} in the affirmative for general groups which are not
  virtually abelian, we do not rely on the identity (1).
    \end{remark}

\begin{remark}(Local-global compatibility of automorphic representations) The identities among Hecke
  eigenvalues used in this paper, such as 
in Lemma \ref{lemma1}:
\[  \pi_2 \otimes \pi_1 +  \omega_2/ \omega_1 = \Lambda^2(\pi_2) + \Sym^2(\pi_1), \tag{1}\]
motivates us to ask a general question:  if {\it polynomial identities} among Hecke eigenvalues
of cuspforms on $\GL_{n_i}(\A_F)$ at almost all unramified places of $F$ 
remain valid for the associated representations of the Weil-Deligne
(or, Weil) group at each finite and infinite place $v$ of $F$. We do not know if
principle of functoriality alone would suffice to prove this. We take a moment to make the question precise. For this, recall that the representation ring of polynomial representations of
$\GL_{n_i}(\C)$ is the polynomial
ring $\Z[\omega^1_{n_i},\omega^2_{n_i},\cdots, \omega^{n_i}_{n_i}]$ where $\omega^a_{n_i}$ represents
the irreducible representation of $\GL_{n_i}(\C)$ on $\Lambda^a(\C^{n_i})$. Given cuspidal automorphic
representations $\Pi_1,\Pi_2,\cdots, \Pi_d$ of $\GL_{n_i}(\A_F)$, $i=1,2,\cdots, d$,  by a polynomial
identity among $\Pi_1,\Pi_2,\cdots, \Pi_d $,  we will mean an element $P$ of the polynomial ring
$\Z[\omega^{m_i}_{n_i}]$ where $1\leq i \leq d$, and $1 \leq m_i \leq n_i$ such that
the evaluation of the polynomial $P$ on the Hecke eigenvalues of $\Pi_i$ is identically zero at
almost all the  finite places $v$ of $F$ where each of the $\Pi_i$ is unramified.   
We write  a polynomial identity among the automorphic representations $\Pi_i$ as
$P(\Pi_1,\Pi_2,\cdots, \Pi_d)=0 $ although we need to keep in mind that the ``variables'' in the polynomial $P$ is $\Pi_i$ together with its exterior powers $\Lambda^m(\Pi_i)$, $1 \leq m \leq n_i$. The question
is:  if $P(\Pi_1,\Pi_2,\cdots, \Pi_d)=0 $ in the sense we just defined, then $P(\Pi_{1v},\Pi_{2v},\cdots, \Pi_{dv})=0 $ as a representation
$W'_{F_v}$ (the Weil-Deligne, or Weil, group) at each finite or infinite place $v$ of $F$ where, now, 
$\Pi_{iv}$ is,  by abuse of notation,  a representation of
$W'_{F_v}$ associated by the local Langlands correspondence to the local component of $\Pi_i$ at the place $v$ of $F$.
  \end{remark}
\section{Isobaric types of $\Sym^6(\pi)$}

For an automorphic representation $\pi_1$ of $\GL_n(\A_F)$ with isobaric decomposition
\[\pi_1 = \pi_{11} \boxplus \pi_{12} \boxplus \cdots \boxplus \pi_{1\ell},\]
where $\pi_{1j}$ are irreducible  cuspidal automorphic representations of $\GL_{d_j}(\A_F)$, we call the set of un-ordered
integers $\{d_j\}$ which forms a partition of $n$ to be the isobaric type of $\pi_1$.

\begin{prop} \label{sym6}
  Let $\pi$ be a cuspidal non-CM automorphic representation of $\GL_2(\A_F)$ with central character
  $\omega: \A^\times_F/F^\times \rightarrow \C^\times$. Assume that $\Sym^i(\pi)$ are automorphic for $i \leq 6$, and that  $\Sym^6(\pi)$ is not cuspidal. Then
  the 
  isobaric type of $\Sym^6(\pi)$ is $(3,3,1)$ if $\pi$ is tetrahedral, $(4,2,1)$ if $\pi$ is octahedral, and of type $(4,3)$ otherwise. 
\end{prop}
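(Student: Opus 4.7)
The plan is to suppose, for contradiction, that $\Sym^{6}(\pi) = \tau \boxplus \rho$ where $\tau$ is a cuspidal representation of $\GL_{5}(\A_F)$ and $\rho$ is an automorphic representation of $\GL_{2}(\A_F)$ — either cuspidal (isobaric type $(5,2)$) or the isobaric sum of two Gr\"ossencharacters (isobaric type $(5,1,1)$). The first step is to exploit the self-duality $\Sym^{6}(\pi)^\vee = \omega^{-6} \otimes \Sym^{6}(\pi)$, which follows from $\pi^\vee = \omega^{-1}\otimes\pi$. Strong multiplicity one applied to the unique 5-dimensional component of both sides forces $\tau^\vee \cong \omega^{-6} \otimes \tau$; since $\tau$ is 5-dimensional and self-dual up to a twist, it is necessarily orthogonal, so $\Sym^{2}\tau$ contains $\omega^{6}$ as a summand in the virtual sense.

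The plan is then to use the $\GL_{2}$ plethysm identities
\[
\Sym^{2}(\Sym^{3}\pi) = \Sym^{6}(\pi) \boxplus \omega^{2} \otimes \Sym^{2}(\pi), \qquad \Lambda^{2}(\Sym^{3}\pi) = \omega \otimes \Sym^{4}(\pi) \boxplus \omega^{3},
\]
which together give the isobaric decomposition of the Rankin product
\[
\Sym^{3}(\pi) \boxtimes \Sym^{3}(\pi) = \tau \boxplus \rho \boxplus \omega^{2}\Sym^{2}(\pi) \boxplus \omega\Sym^{4}(\pi) \boxplus \omega^{3}.
\]
When $\pi$ is not of tetrahedral type, $\Pi := \Sym^{3}(\pi)$ is cuspidal on $\GL_{4}(\A_F)$, so $\Pi \boxtimes \Pi$ is automorphic by Rankin-Selberg, and the left-hand side of the displayed identity is genuinely the isobaric sum of cuspidal representations. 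Invoking Lemma \ref{lemma2} (or, equivalently, the pole of the triple-product $L$-function $L(s, \Pi \times \Pi \times \tau^\vee)$ via Jacquet-Shalika) together with the containment $\tau \subset \Pi \boxtimes \Pi$, we obtain at the level of Hecke eigenvalues the relation $\omega^{3}\Pi \preceq \Pi \boxtimes \tau$, after using $\Pi^\vee = \omega^{-3}\Pi$ and $\tau^\vee = \omega^{-6}\tau$.

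Next the plan is to extract a contradiction by comparing Satake parameters at a generic unramified place $v$: those of $\Pi$ are $\{\alpha_v^{i}\beta_v^{3-i}\}_{i=0}^{3}$, those of $\tau$ form a 5-element subset of $\{\alpha_v^{j}\beta_v^{6-j}\}_{j=0}^{6}$ (because $\tau \subset \Sym^{6}(\pi)$), and those of $\omega^{3}\Pi$ are $(\alpha_v\beta_v)^{3}\{\alpha_v^{i}\beta_v^{3-i}\}_{i=0}^{3}$. The combinatorial constraint imposed by $\omega^{3}\Pi \preceq \Pi \boxtimes \tau$ at every $v$, combined with orthogonality of $\tau$, should force $\tau$ to coincide with $\omega \otimes \Sym^{4}(\pi)$ (the unique 5-dimensional candidate already present in $\Pi \boxtimes \Pi$), which then contradicts strong multiplicity one applied to the above isobaric decomposition of $\Pi \boxtimes \Pi$ (in which $\omega\Sym^{4}(\pi)$ appears with multiplicity one as a separate summand). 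A parallel route is to examine the pole of $L(s, \pi \times \tau^\vee \times \Sym^{5}(\pi))$ (guaranteed by $\tau \subset \Sym^{6}(\pi) \subset \pi \boxtimes \Sym^{5}(\pi)$) and use its symmetric incarnation $\omega\Sym^{5}(\pi) \preceq \pi \boxtimes \tau$ to derive the same rigidity.

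Finally, the case where $\Sym^{3}(\pi)$ is not cuspidal—i.e.\ $\pi$ is of tetrahedral type—must be handled separately. In that case $\Sym^{3}(\pi)$ is an isobaric sum of two cuspidal twists of $\pi$ by cubic Gr\"ossencharacters, and plugging this decomposition into the plethysm identity $\Sym^{6}(\pi) = \Sym^{2}(\Sym^{3}\pi) \ominus \omega^{2}\Sym^{2}(\pi)$ and simplifying via Clebsch-Gordan shows that $\Sym^{6}(\pi)$ has isobaric type $(3,3,1)$, excluding both $(5,2)$ and $(5,1,1)$. The main obstacle will be in the comparison-of-parameters step above: making it rigorous requires working entirely with triple-product $L$-functions \`a la Jacquet-Shalika and the known instances of functoriality (Kim's $\Lambda^{2}$ for $\GL_{4}$, Kim-Shahidi on $\GL_{2}\times\GL_{3}$, and the hypothesis $\Sym^{i}(\pi)$ automorphic for $i\leq 6$), since $\GL_{4}\times\GL_{5}$-functoriality, which would make $\Pi \boxtimes \tau$ genuinely automorphic, is not available.
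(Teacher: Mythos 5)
Your tetrahedral case matches the paper's argument exactly: substituting $\Sym^3(\pi)=\chi_1\pi\boxplus\chi_2\pi$ into $\Sym^2(\Sym^3\pi)=\Sym^6(\pi)\boxplus\omega^2\Sym^2(\pi)$ yields isobaric type $(3,3,1)$. The difficulty is everything after that. Your unified treatment of the remaining cases hinges on the ``comparison of Satake parameters'' step that is supposed to force $\tau\cong\omega\otimes\Sym^4(\pi)$, and you yourself flag that this is the obstacle: $\GL_4\times\GL_5$ functoriality is unavailable, Lemma \ref{lemma2} requires \emph{both} Rankin products to be automorphic, and a place-by-place combinatorial constraint on multisets of eigenvalues does not by itself produce an identity of isobaric summands. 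Moreover, even granting the rigidity $\tau\cong\omega\otimes\Sym^4(\pi)$, your terminal contradiction fails: strong multiplicity one does not forbid an isobaric constituent from occurring with multiplicity two, so $\Pi\boxtimes\Pi=\omega\Sym^4(\pi)\boxplus\omega\Sym^4(\pi)\boxplus\rho\boxplus\omega^2\Sym^2(\pi)\boxplus\omega^3$ is not in itself absurd; you would need a separate pole computation (say for $L(s,\Sym^6(\pi)\times\omega^{-1}\Sym^4(\pi)^\vee)$) to exclude $\omega\Sym^4(\pi)\subset\Sym^6(\pi)$. Note also that in the octahedral case $\Sym^4(\pi)$ is itself reducible, so $\omega\otimes\Sym^4(\pi)$ is not even a candidate for the cuspidal $\GL_5$-constituent $\tau$, and your rigidity heuristic has nothing to converge to.

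The paper avoids all of this by splitting into octahedral and icosahedral subcases and feeding in known decompositions. In the octahedral case, Kim--Shahidi's $\Sym^4(\pi)=\chi_1\pi\boxplus\chi_2\Sym^2(\pi)$ combined with $\Lambda^2\Sym^4(\pi)=\omega\Sym^6(\pi)\boxplus\omega^2\Sym^2(\pi)$ pins the type down to $(4,2,1)$. In the icosahedral case --- where every $\Sym^i(\pi)$ with $i\le 5$ is cuspidal and your approach has no leverage at all --- the essential input is Ramakrishnan's Theorem A$^{\prime}$, namely $\Sym^5(\pi)=\chi\,\pi\boxtimes\Sym^2(\pi')$, which via $\pi\boxtimes\Sym^5(\pi)=\Sym^6(\pi)\boxplus\omega\Sym^4(\pi)$ rewrites $\Sym^6(\pi)\boxplus\omega\Sym^4(\pi)$ as $\chi(\omega\boxplus\Sym^2(\pi))\boxtimes\Sym^2(\pi')$ and forces the type of $\Sym^6(\pi)$ to be $3$ plus a partition of $4$, excluding $(5,2)$ and $(5,1,1)$. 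Your proposal contains no substitute for this external input, so the icosahedral case --- the heart of the proposition --- remains open in your argument.
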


\begin{proof}
  We will split the proof according to the different cases for  $\pi$.

\vspace{.5cm}

\noindent{\bf Tetrahedral case:}
In this case, one knows that $\Sym^3(\pi)$ is reducible,
and by Theorem 2.2.2 of Kim-Shahidi \cite{KS3},
\[\Sym^3(\pi) = \chi_1 \pi \boxplus \chi_2 \pi, \tag{1} \]
for certain Gr\"ossencharacters $\chi_1,\chi_2$ of $F$. Since,
\[\Sym^2\Sym^3 (\pi) = \Sym^6(\pi) \boxplus \omega^2 \Sym^2(\pi),\tag{2}\]
equation (1) gives:  \[ \Sym^6(\pi) \boxplus \omega^2\Sym^2(\pi) = \chi_1^2 \Sym^2(\pi) \boxplus \chi_2^2\Sym^2(\pi) \boxplus \chi_1 \chi_2 \pi \boxtimes \pi.\tag{3} \]

Since we are assuming that $\pi$ is non-CM, $\Sym^2(\pi)$ is a cuspidal automorphic representation of $\GL_3(\A_F)$,
and thus the only option for the isobaric type of $\Sym^6(\pi)$ is (3,3,1).

\vspace{.5cm}

\noindent{\bf Octahedral case:}  In this case, one knows that
$\Sym^2(\pi), \Sym^3(\pi)$ are irreducible, but
$\Sym^4(\pi)$ is reducible, and by Theorem 3.3.7(3) of Kim-Shahidi \cite{KS3},
\[\Sym^4(\pi) = \chi_1 \pi \boxplus \chi_2 \Sym^2(\pi). \tag{4}\]
Since,
\[\Lambda^2\Sym^4 (\pi) = \omega \Sym^6(\pi) \boxplus \omega^2\Sym^2(\pi), \tag{5}\]
using (4) and (5) we have,  \[ \omega \Sym^6(\pi) \boxplus \omega^2\Sym^2(\pi) = \chi_1^2 \Lambda^2(\pi) \boxplus \chi_2^2\Lambda^2\Sym^2( \pi) \boxplus \chi_1 \chi_2 \pi \boxtimes \Sym^2(\pi). \tag{6}\]
On the other hand, \[ \pi \boxtimes \Sym^2(\pi) = \Sym^3(\pi) \boxplus \omega \pi. \tag{7}\]

Using (6) and (7) we find that  the isobaric type of $\Sym^6(\pi)$ is (4,2,1).

\vspace{.5cm}
\noindent{\bf Rest of the cases when  $\Sym^6(\pi)$  is not cuspidal:} (Although one expects this case to consist exactly of icosahedral representations, this seems not known. Such automorphic representations of $\GL_2(\A_F)$  are called {\it quasi-icosahedral} in \cite{Ra}.
For our proof below, this lack of knowledge does not matter.)

By Theorem A$^{\prime}$ of
Ramakrishnan \cite{Ra}, if we are not in the above two cases,  and $\Sym^6(\pi)$  is not cuspidal, then 
$\Sym^i(\pi)$ are cuspidal for all $i \leq 5$, and
\[\Sym^5(\pi) = \chi \pi \boxtimes \Sym^2(\pi')  \tag{8}\]
where $\chi$ is a Gr\"ossencharacter on $F$, and $\pi'$ is a cuspidal automorphic representation
on $\GL_2(\A_F)$ such that $\Sym^2(\pi)$ and $\Sym^2(\pi')$ are not twist equivalent.  

We use the identity:
\[ \pi \boxtimes \Sym^5(\pi)
= \Sym^6(\pi) \boxplus \omega \Sym^4(\pi), \tag{9}\]
therefore using (8) we have,
\[ \chi \pi \boxtimes\pi \boxtimes  \Sym^2(\pi') = \Sym^6(\pi) \boxplus \omega \Sym^4(\pi), \tag{10} \]
or,\[ \chi [\omega\boxplus \Sym^2(\pi)] \boxtimes  \Sym^2(\pi') = \Sym^6(\pi) \boxplus \omega \Sym^4(\pi), \tag{11}\]
which under the assumption that $\Sym^6(\pi)$ is automorphic, is an identity of isobaric automorphic representations.

Since  $\Sym^2(\pi')$ is a cuspidal representation
on $\GL_3(\A_F)$, there is a $\GL_3$-cuspform  in the isobaric decomposition on the left hand side of the identity (11). Further,
we are forced to have a $\GL_5$-cuspform  on the left hand side of the identity (11) to account for
$\Sym^4(\pi)$ on the right (which is given to be cuspidal), so the possible isobaric types on the left hand side  of the identity (11) are (3,5) + a partition of 4.
Thus we deduce that the  isobaric types for $\Sym^6(\pi)$, an automorphic representation on $\GL_7(\A_F)$,
is 3  + a partition of 4. However, $\Sym^6(\pi)$ cannot have, in its isobaric decomposition, a representation of $\GL_1$ or a representation of $\GL_2$ as follows from the isobaric decomposition (9) above. For example, if $\Sym^6(\pi)$ had the shape $\pi_2 + \pi_4$ with $\pi_2$ an automorphic representation of $\GL_2$ and $\pi_4$ on $\GL_4$, then clearly the Rankin product of the right hand side of (9) with $\pi_2^\vee$ will have  a pole (at $s=1$) whereas
the left hand side  of the identity (11) which will be $\pi \boxtimes \Sym^5(\pi) \boxtimes \pi_2^\vee$ does not have a pole since in our case, $\Sym^5(\pi)$ is a cuspidal representation. Thus the only option for the isobaric decomposition of $\Sym^6(\pi)$ is $(4,3)$
\end{proof}


\section{Group theoretic analogue}
 \label{finite}

 In this paper we have answered the Question \ref{conj} for $(\GL_n,\GL_{n+2})$ for $n=1,2,3$ in the positive.
  Thus the first case one may say is left
  unsettled is $(\GL_4, \GL_6)$.

  In this section,  in Example \ref{exam1} we construct instances where our Question \ref{conj} has a positive answer
 (assuming strong Artin conjecture) for $(\GL_4, \GL_6)$,
a case not treated by our work,
and then in the final  remark of the section,
we construct instances where our Question \ref{conj} has a negative answer, using Calegari's work in \cite{Ca},
for $(\GL_4, \GL_6)$. We begin with some generalities on finite groups,
focusing eventually on $\GL_2(\F_q)$.

For representations $V_1$ and $V_2$ of a group $G$, define a relationship $V_1 \preceq V_2$ (to be read as $V_1$ {\bf immersed} in $V_2$)  
if for each element $g \in G$, the set of eigenvalues of the action of $g$ on $V_1$ (counted with multiplicities)
is contained in the set of
eigenvalues of $g$ acting on $V_2$ (counted with multiplicities). Thus if $V_1 \subset V_2$ as representations of $G$, then $V_1 \preceq V_2$.
If $V_1 \preceq V_2$
and $\dim(V_1)=\dim(V_2)$, then of course,
$V_1 \cong V_2$ as $G$-modules, whereas just as in Proposition 1, if $\dim(V_1)+1=\dim(V_2)$, then
also $V_1 \preceq V_2$ implies
$V_1 \subset V_2$ as $G$-modules. 
However, it is not true in general that if $V_1 \preceq V_2,$
then $V_1 \subset  V_2$ as $G$-modules as we will now see.

\begin{prop}
Let $G=\GL_2(\F_q)$. Let $C$ be an irreducible cuspidal representation of $\GL_2(\F_q)$ of dimension $(q-1)$, and
$P$ an irreducible principal series representation of $\GL_2(\F_q)$ of dimension $(q+1)$. Assume that the central character
of $C$ and $P$ are the same, which is $\omega: Z=\F_q^\times \rightarrow \C^\times$. Then,
\[ C \preceq P.\]
\end{prop}

\begin{proof}One knows that:
\begin{enumerate}
\item The restriction of $C$ to the diagonal torus $T =\F_q^\times \times \F_q^\times $ in $\GL_2(\F_q)$
  is the set of all characters with multiplicity 1 of $T$ whose restriction to the center $Z$ is $\omega$.
  These characters of $T$ are also contained in the restriction of $P$ to $T$
  (there are two characters $\{\chi_1 \times \chi_2, \chi_2 \times \chi_1 \}$
  of $T$ appearing in the restriction of $P$ to $T$ with multiplicity 2 which are the characters
  $\{\chi_1 \times \chi_2, \chi_2 \times \chi_1 \}$
  of $T$ used to define the principal series $P$).

\item The restriction of $C$ to the anisotropic torus $S =\F_{q^2}^\times$ in $\GL_2(\F_q)$
is the set of all characters with multiplicity 1 of $S$ whose restriction to the center $Z$ is $\omega$
  except that the two characters
$\{\chi, \bar{\chi}\}$  of $S$  used to define the cuspidal representation $C$ do not appear.
On the other hand,  the restriction of $P$ to $S $ is the
 set of all characters of $S$ with multiplicity 1 whose restriction to the center $Z$ is $\omega$.

    \item The restriction of $C$ to the upper triangular unipotent group $U =\F_q$ in $\GL_2(\F_q)$ is the regular representation of
      $U$ except that the trivial representation of $U$ does not appear in $C$.
      On the other hand,  the restriction of $P$ to $U $ is the regular representation of $U$, except that the
      trivial representation appears twice.
        \end{enumerate}

It follows that $C \preceq P$, with $\dim P - \dim C = 2$. \end{proof}

Now we note the following proposition whose obvious proof will be omitted. Using this proposition, and the example of  $C \preceq P$,
we get counter examples to Question \ref{conj} at the beginning of the paper. (The group $\GL_2(\F_q)$, $q \not =2, 3, 4$ has no two dimensional
irreducible (projective) representation, thus we cannot realize $C$ and $P$ as symmetric powers of a two dimensional representation of a central
cover of $\GL_2(\F_q)$.)

\begin{prop} \label{prec}
  If $G$ is a finite group realized as a Galois group of number fields $G=\Gal(E/F)$, thus any two representations of $G$,
  $V_1$ of dimension $n$
  and $V_2$ of dimension $m$,   gives rise to Artin L-functions, which  assuming the strong Artin conjecture give rise to cuspidal automorphic representations  $\pi_1$ of $\GL_n(\A_F)$ and $\pi_2$ of $\GL_m(\A_F)$. If $V_1 \preceq V_2$, then
  the Hecke eigenvalues of the automorphic representation $\pi_1$ are contained in the Hecke eigenvalues of the automorphic representation $\pi_2$.
\end{prop}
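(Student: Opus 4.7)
The plan is to unwind the strong Artin conjecture into the statement that the Satake parameters on the automorphic side literally equal the Frobenius eigenvalues on the Galois side, after which the containment of Hecke eigenvalues is a tautology applied to one group element at a time. First, by the strong Artin conjecture, the irreducible representations $V_1, V_2$ of $G = \Gal(E/F)$ correspond to cuspidal automorphic representations $\pi_1$ of $\GL_n(\A_F)$ and $\pi_2$ of $\GL_m(\A_F)$, characterized by the equality of (partial) $L$-functions
\[ L^S(s, V_i) \; = \; L^S(s, \pi_i), \]
for a finite set $S$ of places containing the archimedean places, the places ramified in $E/F$, and the places at which $\pi_i$ is ramified.

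Next, I would match up the local factors at a finite place $v \notin S$. On the Galois side, the Euler factor of $L(s, V_i)$ at $v$ equals $\det\bigl(1 - q_v^{-s}\, \rho_i(\mathrm{Frob}_v) \mid V_i\bigr)^{-1}$, where $\mathrm{Frob}_v \in G$ is the Frobenius conjugacy class. On the automorphic side, the unramified Euler factor of $L(s, \pi_i)$ at $v$ is $\prod_{\alpha \in H(\pi_{i,v})}(1 - \alpha\, q_v^{-s})^{-1}$. Equating these two rational functions in $q_v^{-s}$, one reads off that the multiset $H(\pi_{i,v})$ coincides with the multiset of eigenvalues of $\rho_i(\mathrm{Frob}_v)$ acting on $V_i$.

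With this identification in place, the hypothesis $V_1 \preceq V_2$ applied to the single element $g = \mathrm{Frob}_v$ asserts exactly that the multiset of eigenvalues of $\rho_1(\mathrm{Frob}_v)$ on $V_1$ is contained in the multiset of eigenvalues of $\rho_2(\mathrm{Frob}_v)$ on $V_2$, which is the containment $H(\pi_{1,v}) \subseteq H(\pi_{2,v})$. Since this holds at every $v \notin S$, we are done. There is really no hard step; the only thing to be careful about is that the containment $V_1 \preceq V_2$ is a pointwise condition on $G$, so it is applied element-by-element (here, one element per unramified place) and needs no appeal to Chebotarev density. The work, such as it is, lies entirely in invoking the strong Artin conjecture and the compatibility of the local Langlands correspondence with Artin $L$-factors at unramified places.
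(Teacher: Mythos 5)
Your proof is correct and is precisely the argument the authors have in mind: the paper states the proposition "whose obvious proof will be omitted," and your unwinding of the strong Artin conjecture into the identification of $H(\pi_{i,v})$ with the eigenvalue multiset of $\rho_i(\mathrm{Frob}_v)$, followed by applying $V_1 \preceq V_2$ to the single element $\mathrm{Frob}_v$, is exactly that omitted argument. The only point worth adding is that $\mathrm{Frob}_v$ is only a conjugacy class, but eigenvalue multisets are conjugation-invariant, so this causes no issue.
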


\begin{example} \label{exam1} Here is a nice example to illustrate the use of finite groups for Question \ref{conj}. The details of the example are taken from Lemma 5.1 and 5.3 of Kim \cite{Kim2} whose notation we will follow. The group $\SL_2(\F_5)$ has two 2-dimensional
  irreducible representations $\sigma, \sigma^\tau$ (favorite of representation theorists, the odd Weil representation!). These have character values in $\Q(\sqrt{5})$, and are Galois conjugate. We have $\Sym^3(\sigma) \cong \Sym^3(\sigma^\tau)$, an irreducible 4 dimensional representation of $\SL_2(\F_5)$
  extending to a cuspidal representation $C$ of $\GL_2(\F_5)$ of non-trivial central character. Further,
  we have,
  \[\Sym^5(\sigma)\cong \Sym^5(\sigma^\tau) \cong
  \Sym^2(\sigma) \otimes \sigma^\tau \cong \Sym^2(\sigma^\tau) \otimes \sigma,\]
  giving the unique irreducible representation of $\SL_2(\F_5)$ of dimension 6
  extending to a principal series representation $P$ of $\GL_2(\F_5)$ of non-trivial central character, same as that of $C$.
    In particular, for the automorphic representations $\pi_1$ of $\GL_4(\A_F)$
  associated to $\Sym^3(\sigma)$
  and for the automorphic representations $\pi_2$ of $\GL_6(\A_F)$ associated to $\Sym^5(\sigma)$, for which since
$\Sym^3(\sigma) \preceq \Sym^5(\sigma)$,
  Proposition \ref{prec} applies, constructing an instance where
  Question \ref{conj} has an affirmative answer (using the group $G=\SL_2(\F_5)$.) 
  \end{example}

\begin{remark}
By the work of Calegari, cf. \cite{Ca}, 
  irreducible representations of $\Gal(\bar{\Q}/\Q)$ of dimension 4 and 6 factoring through
  a Galois extension $E$ of $\Q$ with $\Gal(E/\Q) = S_5 \cong \PGL_2(\F_5)$  which arise by
    taking a cuspidal representation
  of $\PGL_2(\F_5)$ of dimensions $4=q-1$, and a principal series of dimension $6=q+1$,  give rise to cuspidal automorphic representations $\Pi$ and $ \Pi'$ of
  $\GL_4(\A_\Q)$ and  $\GL_6(\A_\Q)$ respectively.

  Considerations of this section  will then give a counter-example to the Question \ref{conj}
 for the pair of cuspidal automorphic representations $\pi_1=\Pi$ and $\pi_2= \Pi'$ of
  $\GL_4(\A_\Q)$ and  $\GL_6(\A_\Q)$, 
  since by Lemma \ref{cal} below, the automorphic representation $\Pi$ of $\GL_4(\A_\Q)$ does not arise 
  as $\Sym^3(\pi)$ of an automorphic representation $\pi$ of $\GL_2(\A_\Q)$.

  We leave checking that results of this section are in
  conformity with the earlier results in the paper for $q=2,3,4$ to the reader as a curious exercise!   
\end{remark}

The proof of the following lemma is due to F. Calegari.

\begin{lemma} \label{cal}
  Let $\Pi$ be an automorphic representation on $\GL_4(\A_F)$ which  is an Artin representation
  coming from the standard 4-dimensional irreducible representation of $S_5$ (realized as the
  Galois group of an extension $E/F$). Then
  $\Pi$ cannot be written as $\Sym^3(\pi)$ of an automorphic representation $\pi$ of $\GL_2(\A_F)$.
\end{lemma}
\begin{proof}
  Choose a place $v$ of $F$ such that the Frobenius conjugacy class for the extension $E/F$ in $S_5$ is the conjugacy
  class of a transposition in $S_5$, and therefore, the Hecke eigenvalues of $\Pi$ at the place $v$ is:
  \[ (1,1, 1,-1).\]

  Suppose $\Pi_v = \Sym^3(\pi_v)$, with Hecke eigenvalues of $\pi_v$ being $(\alpha_v,\beta_v)$.
  Therefore we have the equality of un-ordered quadruples
  $ (1,1, 1,-1)$ and $(\alpha_v^3, \alpha_v^2\beta_v, \alpha_v\beta^2_v, \beta_v^3)$.
  Assume without loss of generality that $\alpha_v^3 =1$, and one of  $\alpha_v^2\beta_v, \alpha_v\beta^2_v$ is also 1. Thus either $\alpha_v=\beta_v$ or $\alpha_v=-\beta_v$. Neither is an
  option 
  if the un-ordered quadruples 
  $ (1,1, 1,-1)$ and $(\alpha_v^3, \alpha_v^2\beta_v, \alpha_v\beta^2_v, \beta_v^3)$ are the same, 
  and $\alpha_v^3 =1$.
\end{proof}

The following question was posed by F. Calegari.

\begin{question} (F. Calegari)
  If the Hecke eigenvalues of an automorphic representation $\Pi$ of $\GL_{n+1}(\A_F)$ is of the form $(\alpha_v^n, \alpha_v^{n-1}\beta_v,\cdots,
  \alpha_v\beta^{n-1}_v, \beta_v^n)$ at almost all places $v$ of $F$, then is
  $\Pi = \Sym^n(\pi)$ for an automorphic form $\pi$ of $\GL_2(\A_F)$?
  \end{question}

\section{Virtually non-abelian groups} \label{conn}

In this section we prove that the group theoretic analogue of the
Question \ref{conj} has an affirmative answer as long as the group is not `virtually abelian', i.e., does not contain a finite index subgroup which is abelian.

In the following proposition, we call a connected reductive group $Q$ of type $A_1$ if its derived subgroup is $\PGL_2(\C)$ or $\SL_2(\C)$.

\begin{prop} \label{ss}
  Let $G$ be a connected reductive algebraic group over $\C$. Let 
  $\pi_1$ and $\pi_2$ be two finite dimensional representations of $G$ with $\pi_1 \preceq \pi_2$
such that
\[ \dim(\pi_2) - \dim(\pi_1) = 2.\]
Then either
\[     \pi_2  =  \pi_1+ \lambda+\mu,\]
where $\lambda,\mu$ are one dimensional representations of $G$,
or $G$ has a reductive quotient $Q$ of type $A_1$
and $\pi'_1,\pi'_2$  irreducible representations
of $Q$ of  dimensions $d $ and $d+2$ respectively ($d=0$ allowed) such that,
\begin{eqnarray*}
  \pi_1 & = & \pi+ \pi'_1,\\
    \pi_2 & = & \pi+ \pi'_2,
\end{eqnarray*}
for a finite dimensional representation $\pi$ of $G$.
  \end{prop}
\begin{proof}
  Let $T$ be a maximal torus in $G$, and $W$ its Weyl group. Clearly, $\pi_1 \preceq \pi_2$ if and only if the weights of
  $\pi_1$ for the torus $T$ are contained in the weights of
  $\pi_2$ (with multiplicity). Since weights are $W$-invariant, if $\pi_1 \preceq \pi_2$ with $\dim(\pi_2) - \dim(\pi_1) = 2$,
  we see that there is a set of two (not necessarily distinct) weights of $T$
  (that of $\pi_2-\pi_1$) which is $W$-invariant. By Lemma \ref{weyl} below, this means that
  either these are weights of $T$ invariant under $W$, hence arise from
  characters $\lambda,\mu: G \rightarrow \C^\times$,
  or the  group $G$ has a quotient $Q$ (obtained by dividing $G$ by all normal simple groups except one which is $\PGL_2(\C)$ or $\SL_2(\C)$), with a quotient $S$ of $T$ as a maximal torus in $Q$.
 Further, by the same Lemma \ref{weyl} below,
  these two characters of $T$ are pulled back of characters
  of $S$ via the map $T\rightarrow S$.  In the first case, i.e.,  when these are two characters $\lambda,\mu: G \rightarrow \C^\times$,
  the two representations of $G$,    $\pi_2$ and   $  \pi_1+ \lambda + \mu,$
  have the same characters on $T$, therefore must be isomorphic.

  In the second case, we appeal to the elementary fact that for a
  reductive group $Q$ of type $A_1$, with maximal torus $S$, 
  any set of two distinct characters of $S$ of the form $\{\chi,\chi^w\}$ where $w$
  is the unique nontrivial element of the Weyl group of $Q$, $\chi+\chi^w$ is the
  difference of  two irreducible representations  $\pi'_1,\pi'_2$ 
  of $Q$ of  dimensions $d $ and $d+2$ respectively ($d=0$ allowed), therefore,
  we have
  \[\pi_2-\pi_1 = \pi'_2-\pi'_1,\]
  as $T$-modules, and therefore 
    \[\pi_2+\pi'_1 = \pi'_2+\pi_1,\]
    as $G$-modules, and the conclusion of the proposition follows. \end{proof}

\begin{lemma} \label {weyl}
  Let $G$ be a connected reductive  group with $T$ a maximal torus and
  $W$ its Weyl group.
  Then if $\chi$ is a character of $T$ whose $W$-orbit has $\leq 2$ elements, then
  either $\chi$  is the restriction of a one dimensional representation of  $G$ to $T$, or $G$ has a quotient $Q$ of type $A_1$
    with $S$ a
  maximal torus of $Q$ which is a quotient of $T$, and $\chi$ is a character of $T$ factoring through $S$. 
\end{lemma}

\begin{proof}
  It suffices to prove the lemma for semisimple groups where it easily reduces to a simple group. The lemma for  a simple group  reduces to the assertion
  that if $G$ is a simple group which is not  $\PGL_2(\C)$ or
  $\SL_2(\C)$, and  if $\chi$ is a character of $T$ whose $W$-orbit has
  $\leq 2$ elements, then character must be trivial. For this, observe the
  well-known fact, cf. \cite{Hum}, Lemma B of section 10.3, that the stabilizer of  $\chi$ (in $W$), an element
  in the character group $X^{\star}(T)$,  which we assume without loss of generality to belong to fundamental Weyl chamber, is  generated by the simple reflections fixing $\chi$, hence in particular, it is the Weyl group $W_X$ of the associated Levi subgroup. Now $|W/W_X| \geq 3$ can be easily proved for groups $G$ of rank $\geq 2$, by an easy reduction to rank 2 where it is clear. 
\end{proof}

The following corollary follows by an application of Clifford theory (applied to the normal subgroup  $G_0$ of $G$) combined with Proposition \ref{ss} applied to 
$G_0$, we omit its proof.
\begin{cor}
  Let $G$ be an algebraic group over $\C$, with $G_0$, the connected component of identity, a non-abelian reductive group. Assume $G$ has  irreducible
  finite dimensional representations
  $\pi_1$ and $\pi_2$,
    with $\pi_1 \preceq \pi_2$ (when restricted to $G_0$)
such that
\[ \dim(\pi_2) - \dim(\pi_1) = 2,\]
and the action of $G$ on $\pi_1+\pi_2$ is faithful.
Then, both $\pi_1,\pi_2$ remain irreducible when restricted to $G_0$, and their
restriction to $G_0$
arises from a quotient $Q$ (not necessarily semi-simple)
of $G$ of type $ A_1$.
  \end{cor}

\begin{remark}
  By proposition \ref{ss}, there are no relations $\pi_1 \preceq \pi_2$ among irreducible representations of a connected simple algebraic group with
  $\dim(\pi_2) - \dim(\pi_1) = 2,$ 
  other than the obvious ones for $G=\SL_2(\C)$, and $G= \PGL_2(\C)$. Without the constraint on
  $\dim(\pi_2) - \dim(\pi_1) = 2,$ there are naturally many more representations, such
  as the pair of representations $\Lambda^k(\C^n), \Sym^k(\C^n)$ of $\GL_n(\C)$. It seems interesting to classify all possible pairs of irreducible representations $(\pi_1,\pi_2)$ with   $\pi_1 \preceq \pi_2$  for connected simple algebraic groups with $\dim \pi_2 - \dim \pi_1$, a fixed integer.
  \end{remark}

\end{document}